\newtheorem{theorem}{Theorem}[section]
\newtheorem{lemma}[theorem]{Lemma}
\newtheorem{proposition}[theorem]{Proposition}
\theoremstyle{definition}
\newtheorem{definition}[theorem]{Definition}
\newtheorem{example}[theorem]{Example}
\theoremstyle{remark}
\newtheorem{remark}[theorem]{Remark}
\numberwithin{equation}{section}
\begin{document}

\title{Quantum quasi-shuffle algebras II. Explicit formulas, dualization, and representations}

\author{Run-Qiang Jian}
\address{School of Computer Science, Dongguan University
of Technology, 1, Daxue Road, Songshan Lake, 523808, Dongguan, P.
R. China}
\email{jianrq@dgut.edu.cn}
\thanks{}

\subjclass[2010]{Primary 16T25; Secondary 17B37}

\date{}


\keywords{Braided algebra, quantum quasi-shuffle algebra, mixable
shuffle, commutative Rota-Baxter algebra, multiple $q$-zeta value}

\begin{abstract}
Using the concept of mixable shuffles, we formulate explicitly the
quantum quasi-shuffle product, as well as the subalgebra generated
by primitive elements of the quantum quasi-shuffle bialgebra. We
construct a braided coalgebra structure which is dual to the
quantum quasi-shuffle algebra. We provide representations of
quantum quasi-shuffle algebras on commutative braided Rota-Baxter
algebras. As an application, we establish formal power series
whose terms come from a special representation of the
quasi-shuffle algebra on polynomial algebra and whose evaluations at 1 are the multiple
$q$-zeta values.
\end{abstract}

\maketitle
\section{Introduction}
In \cite{Ree}, Ree introduced the shuffle algebra which has been
studied extensively during the past fifty years. The shuffle
product is carried out on the tensor space $T(V)$ of a vector
space $V$ by using the shuffle rule. Its natural generalization is
the quasi-shuffle product where $V$ is moreover an associative
algebra and the new product on $T(V)$ involves both of the shuffle
product and the multiplication of $V$. Quasi-shuffle algebras
first arose in the work of Newman and Radford \cite{NR} for the
study of cofree irreducible Hopf algebras built on associative
algebras, where they were constructed by the universal property of
cofree pointed irreducible coalgebras. Later, they were
rediscovered independently by other mathematicians with various
motivations. In 2000, motivated by his work on multiple zeta
values, Hoffman defined the quasi-shuffle algebra by an inductive
formula (\cite{Hof}). In the same year, Guo and Keigher introduced
the mixable shuffle algebra by using an explicit formula in their
study of Rota-Baxter algebras (\cite{G} and \cite{GK}). After
these seminal works, the research of quasi-shuffle algebras become
active. Besides their own interest, quasi-shuffle algebras have
many significant applications in other branches of mathematics,
such as multiple zeta values (\cite{IKZ}), Rota-Baxter algebras
(\cite{G} and \cite{EG}), and commutative tridendriform algebras
(\cite{Lod}). They also appear in the study of shuffle identities
between Feynman graphs (\cite{K}).

For both of physical and mathematical considerations, people want
to deform or quantize some important algebra structures. The most
famous example is absolutely the quantum group. To people's
surprise, there is an implicit but significant connection between
quantum groups and shuffle algebras. Rosso \cite{Ro} constructed
the quantization of shuffle algebras. This is a new kind of
quantized algebras and leads to an intrinsic understanding of the
quantum group. Since shuffle algebras are special quasi-shuffle
algebras, and the importance of the later one, people would expect
to find out what the quantization of quasi-shuffle algebras is and
whether it can bring us some useful information. Some
$q$-analogues of the quasi-shuffle product were discussed in
\cite{TU}, \cite{Hof} and \cite{B}. But these formulas are not
systematic. For instance, as Hoffman said in \cite{Hof}, his
$q$-deformation of quasi-shuffles is more or less experiential.
The general construction of quantized quasi-shuffles is due to
Rosso (\cite{Ro2}) in the spirit of his quantum shuffles. We
describe Rosso's idea as follows. Let $M$ be
 a Hopf bimodule over a Hopf algebra $H$. In addition, if $M$ is an algebra and the multiplication is
compatible with the module and comodule structure of $M$ in some
sense, then one can construct a new algebra structure on the
cotensor coalgebra $T^c_H(M)$ by using its universal property. Let
$M^R$ be the space of right coinvariants of $M$. Then the subspace
$T(M^R)$ equipped with this new multiplication is a generalization
of the classical quasi-shuffle algebra. In general, given a
braided algebra $(A,m,\sigma)$, one can construct an analogue of
the quasi-shuffle algebra on $T(A)$, where the action of the usual
flip is replaced by that of the braiding $\sigma$. The resulting algebra is
called a quantum quasi-shuffle algebra. In particular, the
$q$-analogues mentioned above are some sort of special cases of
Rosso's quantum quasi-shuffle. In \cite{JR}, the construction of
quantum quasi-shuffles appears, as a special braided cofree Hopf
algebra, in the framework of quantum multi-brace algebras.

Some interesting properties of quantum quasi-shuffle algebras have
been studied in \cite{JRZ}, including the commutativity, universal
property, and etc. In a recent paper \cite{J}, applications of
quantum quasi-shuffle algebras to Rota-Baxter algebras and
tridendriform algebras are found. This paper continues the trip.
We first establish some explicit results concerning this new
subject. We start by reformulating the product. Originally, the
quantum quasi-shuffle algebra is constructed by using the
universal property of connected coalgebras (\cite{JR}). Later, it
is defined through an inductive formula (\cite{JRZ}). But neither
of these two approaches can provide an explicit formula. To know
more about this new subject, a more clear form of the
multiplication formula is definitely helpful. Here, we use the
notion of mixable shuffles introduced in \cite{GK} to establish a
complete description of the quantum quasi-shuffle product. In the
case of quantum shuffles, the subalgebra generated by primitive
elements is especially important. Under some suitable assumptions,
it is isomorphic, as a Hopf algebra, to the positive part of
quantum groups (\cite{Ro}). It seems quite reasonable that the
corresponding subalgebra in a quantum quasi-shuffle algebra will
have some desirable properties. Recently, Fang and Rosso
(\cite{FR}) use it to realize the whole quantum group associated
to a symmetrizable Kac-Moody Lie algebra. In this paper, we use
mixable shuffles to describe such subalgebras. Sometimes, dual
constructions of algebraic objects bring people extra information
different from the original ones. On the other hand, the universal
property of connected coalgebras is not so familiar by non-Hopf
algebraists. So we use the universal property of tensor algebras
to construct a braided coalgebra structure on $T(C)$ for a braided
coalgebra $C$, and show that its dual is the quantum quasi-shuffle
algebra. This enables one to study the quantum quasi-shuffle
algebra through its dual. We would like to mention that Manchon
had studied such a structure when the braiding is the usual flip
map (\cite{Man}). Representation theory is absolutely an essential
tool for the investigation of algebras. Inspired by the work of
Guo and Keigher, we construct representations of quantum
quasi-shuffle algebras on some sort of braided Rota-Baxter
algebras which are introduced in \cite{J}. As an application, we
use some special kind of such representations to construct
formal power series whose terms are the images of an algebra map from
a quasi-shuffle algebra to a Rota-Baxter algebra. The evaluations
of these formal power series at 1 are the multiple $q$-zeta values.

This paper is organized as follows. In Section 2, the notion of braided algebra is recalled and several concrete
examples are provided. In Section 3, we recall
the construction of quantum quasi-shuffle algebras. After that, we
use the notion of mixable shuffles to establish an explicit
formula for the quantum quasi-shuffle product in Section 4 and describe
the subalgebra generated by primitive elements in Section 5. In
Section 6, we construct the dual coalgebra of a quantum
quasi-shuffle algebra. In Section 7, we construct algebra maps
from quantum quasi-shuffle algebras to commutative braided
Rota-Baxter algebras which provide representations of the former
ones. We use this construction to establish formal power series
whose evaluations at 1 are the multiple $q$-zeta values.

\noindent\textbf{Notation.} In this paper, we denote by
$\mathbb{K}$ a ground field of characteristic 0. All the objects
we discuss are defined over $\mathbb{K}$. For a vector space $V$,
we denote by $T(V)$ the tensor algebra of $V$, by $\otimes$ the
tensor product within $T(V)$, and by $\underline{\otimes}$ the one
between $T(V)$ and $T(V)$.

Let $\mathbb{N}$ be the set of positive integers. For any $n\in \mathbb{N}$, we denote by $\mathfrak{S}_{n}$ the symmetric group acting on the
set $\{1,2,\ldots,n\}$ and by $s_{i}$, $1\leq i\leq n-1$, the
standard generators of $\mathfrak{S}_{n}$ permuting $i$ and $i+1$. For any permutation $w\in \mathfrak{S}_{n}$, we usually write it by its two-line form, i.e., \[ w=\left(\begin{array}{cccc}
1&2&\cdots&n\\
w(1)&w(2)&\cdots&w(n)
\end{array}\right).
\]
For fixed $k,n\in \mathbb{N}$, we define the shift map
$\mathrm{shift}_k:\mathfrak{S}_{n}\rightarrow\mathfrak{S}_{n+k}$
by $\mathrm{shift}_k(s_i)=s_{i+k}$ for any $1\leq i\leq n-1$. For
the reason of intuition and the simplicity of notation, we denote
$1_{\mathfrak{S}_{k}}\times w=\mathrm{shift}_k(w)$ for any $w\in
\mathfrak{S}_{n}$. The notations $w\times 1_{\mathfrak{S}_{k}}$,
$1_{\mathfrak{S}_{k}}\times w\times 1_{\mathfrak{S}_{l}}$ and
others are understood similarly.

A braiding $\sigma$ on a vector space $V$ is an invertible linear
map in $\mathrm{End}(V\otimes V)$ satisfying the quantum
Yang-Baxter equation on $V^{\otimes 3}$: $$(\sigma\otimes
\mathrm{id}_{V})(\mathrm{id}_{V}\otimes \sigma)(\sigma\otimes
\mathrm{id}_{V})=(\mathrm{id}_{V}\otimes \sigma)(\sigma\otimes
\mathrm{id}_{V})(\mathrm{id}_{V}\otimes \sigma).$$ A braided
vector space $(V,\sigma)$ is a vector space $V$ equipped with a
braiding $\sigma$. For any $n\in \mathbb{N}$ and $1\leq i\leq
n-1$, we denote by $\sigma_i$ the operator $\mathrm{id}_V^{\otimes
i-1}\otimes \sigma\otimes \mathrm{id}_V^{\otimes n-i-1}\in
\mathrm{End}(V^{\otimes n})$. For any $w\in \mathfrak{S}_{n}$, we
denote by $T^\sigma_w$ the corresponding lift of $w$ in the braid
group $B_n$, defined as follows: if $w=s_{i_1}\cdots s_{i_l}$ is
any reduced expression of $w$, then $T^\sigma_w=\sigma_{i_1}\cdots
\sigma_{i_l}$. This definition is well-defined (see, e.g., Theorem
4.12 in \cite{KT}).

We define $\beta:T(V)\underline{\otimes} T(V)\rightarrow
T(V)\underline{\otimes} T(V)$ by requiring that the restriction of
$\beta$ on $V^{\otimes i}\underline{\otimes} V^{\otimes j}$,
denoted by $\beta_{ij}$, is $T^\sigma_{\chi_{ij}}$ , where
\[\chi_{ij}=\left(\begin{array}{cccccccc}
1&2&\cdots&i&i+1&i+2&\cdots & i+j\\
j+1&j+2&\cdots&j+i&1& 2 &\cdots & j
\end{array}\right)\in \mathfrak{S}_{i+j},\] for any $i,j\geq 1$. For convenience, we denote by
$\beta_{0i}$ and $\beta_{i0}$ the identity map of $V^{\otimes i}$.

Given an invertible element $q\in \mathbb{K}$ which is not a root
of unity, we denote $[0]_{q}=1$ and
$[n]_{q}=1+q+q^2+\cdots+q^{n-1}=\frac{1-q^n}{1-q}$ when $n\in
\mathbb{N}$. We also denote $[n]_{q}!=[1]_{q}\cdots [n]_{q}$.

\section{Braided algebras}
We start by recalling the notion of braided algebras which is the
relevant object of associative algebras in braided categories. In
the following, all algebras are always assumed to be associative,
but not necessarily unital.
\begin{definition}Let $A=(A,m)$ be an algebra with product $m$, and $\sigma$ be a braiding on $A$. We call the triple $(A,m,\sigma)$ a \emph{braided algebra} if it satisfies the following conditions:
\[
\begin{split}
(\mathrm{id}_A\otimes m)\sigma_1\sigma_2&=\sigma( m\otimes
\mathrm{id}_A),\\[3pt] ( m\otimes
\mathrm{id}_A)\sigma_2\sigma_1&=\sigma(\mathrm{id}_A\otimes m).
\end{split}
\]
Moreover, if $A$ is unital and its unit $1_A$ satisfies that for
any $a\in A$,
\[
\begin{split}
\sigma(a\otimes 1_A)&=1_A\otimes a,\\[3pt] \sigma(1_A\otimes a)&=a\otimes
1_A,
\end{split}
\]then $A$ is called a \emph{unital braided algebra}.
\end{definition}

Because all the constructions in this paper are based on braided
algebras, we provide several concrete examples which will either
be used in our later discussion or afford the reader some
illustrations. Some of them may be known, while some may be new.
For more examples, one can see \cite{B} and \cite{JR}.

\begin{example}Let $V$ be a vector space with basis $\{e_i\}$ which is at most countable. We provide a braided algebra structure on $V$. The braiding $\sigma$ on $V$ is given by $\sigma(e_i\otimes
e_j)=q_{ij}e_j\otimes e_i$, where $q_{ij}$'s are nonzero scalars
in $\mathbb{K}$ such that $q_{ij}q_{ik}=q_{i\ j+k}$ and
$q_{ik}q_{jk}=q_{i+j\ k}$ for any $i,j,k$. For instance, let $q$
be a nonzero scalar in $\mathbb{K}$ and $q_{ij}=q^{ij}$. A
multiplication $\cdot$ on $V$ which is compatible with the
braiding $\sigma$ is given as follows.

Case 1. If $V$ is a finite-dimensional vector space with basis
$\{e_1,e_2,\ldots,e_N\}$, then we define

\[e_i\cdot e_j= \left\{
\begin{array}{lll}
e_{i+j},&& \mathrm{if\ }i+j\leq N,\\[3pt]
0,&&\mathrm{otherwise}.
\end{array} \right.
\]

Case 2. If $V$ is a vector space with basis $\{e_i\}_{i\in
\mathbb{N}}$, then we define $e_i\cdot e_j=e_{i+j}$ for any
$i,j\in \mathbb{N}$.

It is evident that $\cdot$ is an associative algebra structure on
$V$ in both cases. Notice that
\begin{eqnarray*}(\mathrm{id}_V\otimes
\cdot)\sigma_1\sigma_2 (e_i\otimes e_j\otimes
e_k)&=&q_{jk}q_{ik}e_k\otimes e_{i+j}\\[3pt]
&=&q_{i+j\ k}e_k\otimes e_{i+j}\\[3pt]&=&\sigma(
\cdot\otimes\mathrm{id}_V)(e_i\otimes e_j\otimes
e_k),\end{eqnarray*}
 and similarly
$(
\cdot\otimes\mathrm{id}_V)\sigma_2\sigma_1=\sigma(\mathrm{id}_V\otimes
\cdot)$. Therefore $(V,\cdot,\sigma)$ is a braided algebra.

In particular, the polynomial algebra $\mathbb{K}[t]$ is a braided
algebra with respect to the braiding defined by $\sigma(t^n\otimes
t^m)=q^{nm}t^m\otimes t^n$.
\end{example}

\begin{example}All notions of this example can be found in \cite{Ka}. Let $q\neq 1$ be an invertible scalar in $\mathbb{K}$, and $x,y$ be two indeterminates. Denote by $\mathbb{K}_q[x,y]$ the quantum plane, i.e., the algebra generated by $x,y$ subject to the relation $yx=qxy$. It has a linear basis $\{x^iy^j\}_{i,j\geq 0}$. Define two algebra automorphisms $\omega_x$ and $\omega_y$ of $\mathbb{K}_q[x,y]$ by requiring that
$$\omega_x(x)=qx,\omega_x(y)=y,\omega_y(x)=x,\omega_y(y)=qy,$$ and define two endomorphisms $\partial_q/\partial x$ and $\partial_q/\partial y$ by requiring that $$\frac{\partial_q(x^my^n)}{\partial x}=[m]x^{m-1}y^n,\frac{\partial_q(x^my^n)}{\partial y}=[n]x^my^{n-1},$$ where $[k]=\frac{q^k-q^{-k}}{q-q^{-1}}$ for any $k\in \mathbb{N}$.

Let $U_q \mathfrak{sl}_2$ be the quantized algebra associated to
$\mathfrak{sl}_2$, i.e., the algebra generated by $E,F,K,K^{-1}$
subject to the relations $$KK^{-1}=K^{-1}K=1,$$
$$KE=q^2EK, \ KF=q^{-2}FK,$$  $$EF-FE=\frac{K-K^{-1}}{q-q^{-1}}.$$ It is well-known that $U_q \mathfrak{sl}_2$ is a quasi-triangular Hopf algebra.

By Theorem VII 3.3 in \cite{Ka}, $\mathbb{K}_q[x,y]$ is a $U_q
\mathfrak{sl}_2$-module-algebra with the following module
structure: for any $P\in \mathbb{K}_q[x,y] $,
$$EP=x\frac{\partial_q(P)}{\partial y}, FP=\frac{\partial_q(P)}{\partial x}y, $$ $$KP=(\omega_x\omega_y^{-1})(P), K^{-1}P=(\omega_y\omega_x^{-1})(P).$$
Set $V=\mathrm{Span}_\mathbb{K}\{x,y\}$. It is not hard to see
that the above action restricting on $V$ is the standard
2-dimensional simple $U_q \mathfrak{sl}_2$-module structure. We
know that (Theorem 2.7 in \cite{JR}) every module-algebra over a
quasi-triangular Hopf algebra has a braided algebra structure. So
$\mathbb{K}_q[x,y]$ is a braided algebra.\end{example}

\begin{example}Let $(V,\sigma)$ be a braided vector space.  It
is know that $(T(V),m,\beta)$ is a braided algebra, where $m$ is
the concatenation product. Let $M_n: V^{\otimes n}\rightarrow
T(V)$ be a linear map such that $\beta(M_n\otimes
\mathrm{id}_V)=(\mathrm{id}_V\otimes M_n)\beta_{n1}$ and
$\beta(\mathrm{id}_V\otimes M_n)=(M_n\otimes\mathrm{id}_V
)\beta_{1n}$. If we denote by $\mathcal{I}$ the ideal of $T(V)$
generated by $\mathrm{Im}M_n$, the image of $M_n$, then
$\beta\big(T(V)\underline{\otimes}\mathcal{I}+\mathcal{I}\underline{\otimes}T(V)\big)\subset
T(V)\underline{\otimes}\mathcal{I}+\mathcal{I}\underline{\otimes}T(V)$.
So the quotient algebra $T(V)/\mathcal{I}$ is also a braided
algebra. For instance, if $M_2=\mathrm{id}_V^{\otimes 2}-\sigma$,
then the quotient algebra is the $r$-symmetric algebra defined in
\cite{B}.
\end{example}

\begin{example}Let $H$ be a finite dimensional quasi-triangular Hopf algebra. By a result of Majid (Theorem 3.3 in \cite{M}), the quantum double $\mathcal{D}(H)$ of $H$ is a braided algebra (according to a discussion in \cite{JR} for Radford's work \cite{Ra}).\end{example}

\section{Quantum quasi-shuffle algebras}
For any algebra $A$, it is a braided algebra with respect to the
flip map switching the two factors of $A\otimes A$. One can
construct an algebra structure on $T(A)$ which combines the
multiplication of $A$ and the shuffle product of $T(A)$ (see
\cite{NR}). This structure is the so-called quasi-shuffle algebra.
If the flip map is replaced by a general braiding, one can
construct a quantized quasi-shuffle product by assuming some
compatibilities between the multiplication of $A$ and the braiding
(for more details, one can see \cite{JR} and \cite{JRZ}). More
precisely, given a braided algebra $(A, m,\sigma)$, the
\emph{quantum quasi-shuffle product} $\Join_\sigma$ on $T(A)$ is
given by the following formulas.

For any $\lambda\in \mathbb{K}$ and $x\in T(A)$,
$$\lambda\Join_\sigma x=x \Join_\sigma \lambda=\lambda \cdot x.$$

For $i,j\geq 2$ and any $a_1,\ldots, a_i,b_1,\ldots, b_j\in A$,
$\Join_\sigma$ is defined recursively by
$$a_1\Join_\sigma b_1= a_1\otimes b_1+\sigma(a_1\otimes
b_1)+m(a_1\otimes b_1),$$
\begin{eqnarray*}\lefteqn{ a_1\Join_\sigma  (b_1\otimes
\cdots\otimes
b_j)}\\[3pt]
&=&a_1\otimes b_1\otimes \cdots\otimes  b_{j}+(\mathrm{id}_A\otimes\Join_{\sigma  (1,j-1)})(\beta_{1,1}\otimes\mathrm{id}_A^{\otimes  j-1})(a_1\otimes b_1\otimes \cdots\otimes  b_j)\\[3pt]
&&+m(a_1\otimes  b_1)\otimes b_2\otimes \cdots\otimes  b_j,
\end{eqnarray*}
\begin{eqnarray*}
\lefteqn{(a_1\otimes \cdots\otimes  a_i)\Join_\sigma  b_1}\\[3pt]
&=&a_1\otimes  \big((a_2\otimes \cdots\otimes  a_i)\Join_\sigma  b_1\big)+\beta_{i,1}(a_1\otimes \cdots\otimes  a_i\otimes  b_1)\\[3pt]
&&+(m\otimes\mathrm{id}_A^{\otimes  i-1})(\mathrm{id}_A\otimes
\beta_{i-1,1})(a_1\otimes \cdots\otimes a_i\otimes b_1),
\end{eqnarray*}
and\begin{eqnarray*}
\lefteqn{(a_1\otimes \cdots\otimes  a_i)\Join_\sigma  (b_1\otimes \cdots\otimes  b_j)}\\[3pt]
&=&a_1\otimes  \big((a_2\otimes \cdots\otimes  a_i)\Join_\sigma (b_1\otimes \cdots\otimes  b_{j})\big)\\[3pt]
&&+(\mathrm{id}_A\otimes  \Join_{\sigma  (i,j-1)})(\beta_{i,1}\otimes \mathrm{id}_A^{\otimes  j-1})(a_1\otimes \cdots\otimes  a_i\otimes  b_1\otimes \cdots\otimes  b_j)\\[3pt]
&&+(m\otimes \Join_{\sigma  (i-1,j-1)} )(\mathrm{id}_A\otimes
\beta_{i-1,1}\otimes \mathrm{id}_A^{\otimes  j-1})(a_1\otimes
\cdots\otimes a_i\otimes b_1\otimes \cdots\otimes  b_j),
\end{eqnarray*}
where $\Join_{\sigma (i,j)}$ denotes the restriction of
$\Join_\sigma $ on $A^{\otimes  i}\underline{\otimes} A^{\otimes
j}$.

\begin{remark}1. Given a braided algebra $(A, m,\sigma)$, $T_{\sigma}^{qsh}(A)=(T(A),\Join_\sigma)$ is a an associative algebra with unit $1\in \mathbb{K}$, and called the \emph{quantum quasi-shuffle algebra} built on $(A,
m,\sigma)$. Furthermore, the algebra $T_{\sigma}^{qsh}(A)$,
together with the braiding $\beta$ and the deconcatenation
coproduct $\delta$, forms a braided bialgebra in the sense of
\cite{Ta} (see \cite{JR}). The set of primitive elements is
exactly $A$.

2. By using Example 2.2, Proposition 17 in \cite{JRZ} can be
applied to any vector space whose basis is at most countable. In
other words, for any vector space $V$ with at most countable
basis, one can provide a linear basis of $T(V)$ by combining the
quantum quasi-shuffle product with Lyndon words.\end{remark}

\begin{example}[Hoffman's q-deformation]In \cite{Hof}, Hoffman defined his q-deformation of
quasi-shuffles. It is an attempt to deform the quasi-shuffle
product according to the quantum shuffle product. Now we give an
explanation of the q-deformation from a point of view of quantum
quasi-shuffles. Let $X$ be a locally finite set, i.e., $X$ is a
disjoint union of finite set $X_n$, whose elements are called
letters of degree n, for $n\geq 1$. We denote by $\mathfrak{X}$
the vector space spanned by $X$. The elements in
$T(\mathfrak{X})$, which are of the form $a_1\otimes a_2\otimes
\cdots \otimes a_m$ with $a_i\in X$, are called words. Let $[ ,]$
be a graded associative product on $\mathfrak{X}$. Hoffman defined
an associative product $\ast_q$ on $T(\mathfrak{X})$: for any
words $w_1,w_2\in T(\mathfrak{X})$ and letters $a,b\in X $,
\begin{eqnarray*}(a\otimes
w_1)\ast_q (b\otimes w_2) &=&a\otimes \big(w_1\ast_q(b\otimes
w_2)\big)+q^{|a\otimes w_1||b|}b\otimes \big((a\otimes w_1)\ast_q
w_2
\big)\\[3pt]
&&+q^{|w_1||b|}[a,b]\otimes(w_1\ast_q w_2),\end{eqnarray*}where
$|w|$ is the degree of a word, i.e., the sum of degrees of its
factors.

We define a braiding $\sigma$ on $\mathfrak{X}$ as follows: for
$x\in X_i$ and $y\in X_j$, $\sigma(x\otimes y)=q^{ij}y\otimes x$,
where $q\in K$ is a nonzero scalar. Since the product $[ ,]$
preserves the grading, it is an easy exercise to verify that
$(\mathfrak{X},[,],\sigma)$ is a braided algebra. By comparing
their reductive formulas, one can see that the quantum
quasi-shuffle algebra built on $(\mathfrak{X},[,],\sigma)$ is just
Hoffman's q-deformation.\end{example}

A similar discussion can be given to the formula of the
multiplication rule of quantum quasi-monomial functions in
\cite{TU}.

\section{An explicit formula for the quantum quasi-shuffle product}
In order to give a more explicit description of the quantum
quasi-shuffle product, we need to recall some terminologies
introduced in \cite{GK}. An $(i,j)$-\emph{shuffle} is an element
$w\in \mathfrak{S}_{i+j}$ such that $w (1) < \cdots <w (i)$ and $w
(i+1) < \cdots <w (i+j)$. We denote by $\mathfrak{S}_{i,j}$ the
set of all $(i,j)$-shuffles. Given an $(i,j)$-shuffle $w$, a pair
$(k,k+1)$, where $1\leq k < i+j$, is called an \emph{admissible
pair} for $w$ if $w^{-1}(k)\leq i<w^{-1}(k+1)$. We denote by
$\mathcal{T}^w$ the set of all admissible pairs for $w$. For any
subset $S$ of $\mathcal{T}^w$, the pair $(w,S)$ is called a
\emph{mixable $(i,j)$-shuffle}. We denote by
$\overline{\mathfrak{S}}_{i,j}$ the set of all mixable
$(i,j)$-shuffles, i.e.,
$$\overline{\mathfrak{S}}_{i,j}=\{(w,S)|w\in \mathfrak{S}_{i,j}, S\subset \mathcal{T}^w\}.$$

Let $(A, m,\sigma)$ be a braided algebra. Define $m^k:A^{\otimes
k+1}\rightarrow A$ recursively by $m^0=\mathrm{id}_A$, $m^1=m$ and
$m^k=m(\mathrm{id}_A\otimes m^{k-1})$ for $k\geq 2$. Given
$n\in\mathbb{N}$, we denote
$$\mathcal{C}(n)=\{I=(i_1,\ldots,i_k)\in
\mathbb{N}^k|1\leq k\leq n, i_1+\cdots+i_k=n\}.$$
 The elements in $\mathcal{C}(n)$ are called \emph{compositions} of
 $n$. For any $I=(i_1,\ldots,i_k)\in \mathcal{C}(n)$, we define $m_I=m^{i_1-1}\otimes\cdots\otimes
 m^{i_k-1}$. For any $(w,S)\in\overline{\mathfrak{S}}_{i,j}$, we
 associate to $S$ a composition $\textrm{cp}(S)$ of $i+j$ as follows: if
$S=\{(k_1,k_1+1),\ldots, (k_s,k_s+1)\}$ with $k_1<\cdots <k_s$,
set  $$\textrm{cp}(S)=(\underbrace{1,\ldots\ldots,1}_{k_1-1\
\textrm{copies}},
2,\underbrace{1,\ldots\ldots\ldots\ldots,1}_{k_2-k_1-2\
\textrm{copies}},2,\ldots,2,\underbrace{1,\ldots\ldots\ldots,1}_{i+j-k_s-1\
\textrm{copies}}).$$  By convention, we set
$\textrm{cp}(\emptyset)=(1,1,\ldots,1)$. Denote
$T_{(w,S)}^\sigma=m_{\textrm{cp}(S)}\circ T_w^\sigma$. For instance, let \[ w=\left(\begin{array}{ccccccccc}
1&2&3&4&5&6&7&8&9\\
2&4&6&8&1&3&5&7&9
\end{array}\right)\in\mathfrak{S}_{4,5},
\]and $S=\{(2,3),(4,5),(8,9)\}$. Then $\textrm{cp}(S)=(1,2,2,1,1,2)$ and $$T_{(w,S)}^\sigma=(\mathrm{id}_A\otimes m\otimes m\otimes \mathrm{id}_A\otimes \mathrm{id}_A\otimes m)\sigma_1\sigma_3\sigma_5\sigma_7\sigma_2\sigma_4\sigma_6\sigma_3\sigma_5\sigma_4.$$

\begin{theorem}Let $(A, m,\sigma)$ be a braided algebra. Then for any $a_1,\ldots,a_{i+j}\in A$,  $$(a_1\otimes\cdots\otimes a_i)\Join_\sigma (a_{i+1}\otimes\cdots\otimes a_{i+j})=\sum_{(w,S)\in\overline{\mathfrak{S}}_{i,j}}T_{(w,S)}^\sigma(a_1\otimes \cdots\otimes a_{i+j}).$$\end{theorem}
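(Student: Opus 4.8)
The plan is to argue by induction on $n=i+j$, peeling off the first output letter and matching the four-line recursive definition of $\Join_\sigma$ against a partition of the index set $\overline{\mathfrak{S}}_{i,j}$. The degenerate cases $i=0$ or $j=0$ are immediate, since then $\overline{\mathfrak{S}}_{i,j}$ consists of the identity shuffle with empty admissible set and $T^\sigma_{(w,S)}=\mathrm{id}$. The essential base case is $i=j=1$: the two $(1,1)$-shuffles are $e$, with $\mathcal{T}^{e}=\{(1,2)\}$, and $s_1$, with $\mathcal{T}^{s_1}=\emptyset$, so the three mixable shuffles $(e,\emptyset),(e,\{(1,2)\}),(s_1,\emptyset)$ produce $a_1\otimes a_2$, $m(a_1\otimes a_2)$ and $\sigma(a_1\otimes a_2)$, reproducing the defining formula for $a_1\Join_\sigma a_2$.

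For the inductive step, note that for an $(i,j)$-shuffle $w$ the preimage $w^{-1}(1)$ must equal $1$ or $i+1$, since the two increasing blocks attain their minima at positions $1$ and $i+1$. I would therefore split $\overline{\mathfrak{S}}_{i,j}$ into three classes: those with $w^{-1}(1)=1$ and $(1,2)\notin S$; those with $w^{-1}(1)=i+1$; and those with $w^{-1}(1)=1$, $w^{-1}(2)=i+1$ and $(1,2)\in S$. This is an exhaustive disjoint partition: when $w^{-1}(1)=i+1$ the pair $(1,2)$ is never admissible, and since two consecutive pairs cannot both be admissible, $(1,2)\in S$ forces $w^{-1}(2)=i+1$. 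The claim is that these three classes reproduce, respectively, the first, second and third summands of the recursion.

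Each match combines a bijection onto a smaller index set with a factorization of $T^\sigma_{(w,S)}=m_{\mathrm{cp}(S)}T^\sigma_w$. In the first class, deleting the fixed leading slot gives $w=1_{\mathfrak{S}_1}\times w'$ for an $(i-1,j)$-shuffle $w'$, so $T^\sigma_w=\mathrm{id}_A\otimes T^\sigma_{w'}$; as $(1,2)\notin S$ the composition $\mathrm{cp}(S)$ begins with $1$, hence $m_{\mathrm{cp}(S)}=\mathrm{id}_A\otimes m_{\mathrm{cp}(S')}$ and $T^\sigma_{(w,S)}=\mathrm{id}_A\otimes T^\sigma_{(w',S')}$; summing and applying the inductive hypothesis gives $a_1\otimes\big((a_2\otimes\cdots\otimes a_i)\Join_\sigma(a_{i+1}\otimes\cdots\otimes a_{i+j})\big)$. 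In the second class one factors $w=(1_{\mathfrak{S}_1}\times w')\cdot(\chi_{i,1}\times 1_{\mathfrak{S}_{j-1}})$ with $w'$ an $(i,j-1)$-shuffle, whence $T^\sigma_w=(\mathrm{id}_A\otimes T^\sigma_{w'})(\beta_{i,1}\otimes\mathrm{id}_A^{\otimes j-1})$ using $\beta_{i,1}=T^\sigma_{\chi_{i,1}}$; again $\mathrm{cp}(S)$ begins with $1$, and the inductive hypothesis yields the second summand $(\mathrm{id}_A\otimes\Join_{\sigma(i,j-1)})(\beta_{i,1}\otimes\mathrm{id}_A^{\otimes j-1})$. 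The third class is parallel: here $w=(1_{\mathfrak{S}_2}\times w')\cdot(1_{\mathfrak{S}_1}\times\chi_{i-1,1}\times 1_{\mathfrak{S}_{j-1}})$ with $w'$ an $(i-1,j-1)$-shuffle, and since $(1,2)\in S$ the composition begins with a $2$, so $m_{\mathrm{cp}(S)}=m\otimes m_{\mathrm{cp}(S')}$ and one recovers $(m\otimes\Join_{\sigma(i-1,j-1)})(\mathrm{id}_A\otimes\beta_{i-1,1}\otimes\mathrm{id}_A^{\otimes j-1})$.

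The crux is the braid bookkeeping underlying the second and third classes: passing from a permutation factorization to the operator factorization requires that the lengths add, e.g. $\ell(w)=\ell(w')+i$ in the second class and $\ell(w)=\ell(w')+(i-1)$ in the third, since $T^\sigma_{uv}=T^\sigma_u T^\sigma_v$ holds only for length-additive products. I would establish this by an inversion count, showing that an $(i,j)$-shuffle with $w^{-1}(1)=i+1$ (resp. $w^{-1}(1)=1$, $w^{-1}(2)=i+1$) admits a reduced word ending in the reduced word of $\chi_{i,1}\times 1_{\mathfrak{S}_{j-1}}$ (resp. $1_{\mathfrak{S}_1}\times\chi_{i-1,1}\times 1_{\mathfrak{S}_{j-1}}$). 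Running alongside is the routine but careful verification that the bijections send $S$ to the shifted admissible set $S'$ and that $\mathrm{cp}(S)$ splits off exactly the leading $1$ or $2$, so that $m_{\mathrm{cp}(S)}$ peels off the correct tensor factor. It is worth noting that the braided-algebra axioms are not needed for this identity itself: in each class the leading product $m$, when present, stays at the front and is never commuted past a braiding.
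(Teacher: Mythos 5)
Your proposal is correct and follows essentially the same route as the paper's proof: induction on $i+j$, with the same three-way partition of the mixable shuffles (according to whether position $1$ is filled from the first or the second block, and whether $(1,2)\in S$), the same bijections onto $\overline{\mathfrak{S}}_{i-1,j}$, $\overline{\mathfrak{S}}_{i,j-1}$ and $\overline{\mathfrak{S}}_{i-1,j-1}$, and the same appeal to reducedness (length-additivity) when factoring the braid-group lifts. One small quibble: the fact that $(1,2)\in S$ forces $w^{-1}(2)=i+1$ follows from the shuffle condition (once $w^{-1}(2)>i$, the value $2$ must be the minimum attained on the second block), not from the observation that consecutive pairs cannot both be admissible; this is immediate and does not affect the argument.
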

\begin{proof}We use induction on $i+j$.

When $i=j=1$,
$$a_1\Join_\sigma a_2=m(a_1\otimes a_2)+a_1\otimes a_2+\sigma(a_1\otimes a_2).$$
On the other hand,
$\overline{\mathfrak{S}}_{1,1}=\{(1_{\mathfrak{S}_2},\emptyset),
(1_{\mathfrak{S}_2},\{(1,2)\}), (s_1,\emptyset)\}$, where $s_1$ is
the generator of $\mathfrak{S}_2$. So the formula holds.

We assume that the formula is true in the case $\leq i+j$. By the
inductive formula of quantum quasi-shuffles, we have that
\begin{eqnarray*}\lefteqn{(a_1\otimes\cdots\otimes a_{i+1})\Join_\sigma (a_{i+2}\otimes\cdots\otimes a_{i+j+1})}\\[3pt]
&=&a_1\otimes \big((a_2\otimes\cdots\otimes a_{i+1})\Join_\sigma (a_{i+2}\otimes\cdots\otimes a_{i+j+1})\big)\\
&&+(\mathrm{id}_A\otimes \Join_{\sigma (i+1,j-1)})(\beta_{i+1,1}\otimes \mathrm{id}_A^{\otimes j-1})(a_1\otimes \cdots\otimes a_{i+j+1})\nonumber\\
&&+(m\otimes\Join_{\sigma (i,j-1)} )(\mathrm{id}_A\otimes
\beta_{i,1}\otimes \mathrm{id}_A^{\otimes
j-1})(a_1\otimes \cdots\otimes a_{i+j+1})\\[3pt]
&=&\sum_{(w,S)\in\overline{\mathfrak{S}}_{i,j}}(\mathrm{id}_A\otimes T_{(w,S)}^\sigma)(a_1\otimes \cdots\otimes a_{i+j+1})\\
&&+\sum_{(w,S)\in\overline{\mathfrak{S}}_{i+1,j-1}}(\mathrm{id}_A\otimes T_{(w,S)}^\sigma)(\beta_{i+1,1}\otimes \mathrm{id}_A^{\otimes j-1})(a_1\otimes \cdots\otimes a_{i+j+1})\nonumber\\
&&+\sum_{(w,S)\in\overline{\mathfrak{S}}_{i,j-1}}(m\otimes
T_{(w,S)}^\sigma )(\mathrm{id}_A\otimes \beta_{i,1}\otimes
\mathrm{id}_A^{\otimes j-1})(a_1\otimes \cdots\otimes
a_{i+j+1})\\[3pt]
&=&\sum_{(w,S)\in\overline{\mathfrak{S}}_{i,j}}(\mathrm{id}_A\otimes m_{\textrm{cp}(S)})T_{1_{\mathfrak{S}_1}\times w}^\sigma(a_1\otimes \cdots\otimes a_{i+j+1})\\
&&+\sum_{(w,S)\in\overline{\mathfrak{S}}_{i+1,j-1}}(\mathrm{id}_A\otimes m_{\textrm{cp}(S)})T_{(1_{\mathfrak{S}_1}\times w)\circ (\chi_{i+1,1}\times 1_{\mathfrak{S}_{j-1}})}^\sigma(a_1\otimes \cdots\otimes a_{i+j+1})\nonumber\\
&&+\sum_{(w,S)\in\overline{\mathfrak{S}}_{i,j-1}}(m\otimes
m_{\textrm{cp}(S)} )T^\sigma_{(1_{\mathfrak{S}_2}\times w)\circ
(1_{\mathfrak{S}_1}\times \chi_{i,1}\times
1_{\mathfrak{S}_{j-1}})}(a_1\otimes \cdots\otimes
a_{i+j+1}),\end{eqnarray*} where the third equality follows from
the fact that all the expressions of the permutations being lifted
are reduced.

Denote $$\mathcal{S}_1=\{(w,S)\in
\overline{\mathfrak{S}}_{i+1,j}|(1,2)\notin S, w(1)=1\},$$
$$\mathcal{S}_2=\{(w,S)\in \overline{\mathfrak{S}}_{i+1,j}|(1,2)\notin S, w(i+2)=1\},$$and
$$\mathcal{S}_3=\{(w,S)\in \overline{\mathfrak{S}}_{i+1,j}|(1,2)\in S\}.$$For any $(i+1,j)$-shuffle $w$, one has either $w(1)=1$ or $w(i+2)=1$.
Therefore $\mathcal{S}_1$, $\mathcal{S}_2$ and $ \mathcal{S}_3$
are mutually disjoint,  and
$\mathfrak{S}_{i+1,j}=\mathcal{S}_1\cup\mathcal{S}_2\cup\mathcal{S}_3$.

We make a further observation. It is easy to see that there is a
one-to-one correspondence between $\mathfrak{S}_{i,j}$ and $\{w\in
\mathfrak{S}_{i+1,j}|w(1)=1\}$ given by $w\mapsto
1_{\mathfrak{S}_1}\times w$ for any $w\in \mathfrak{S}_{i,j}$. So
$$\mathcal{S}_1=\{(1_{\mathfrak{S}_1}\times w,S)|w\in \mathfrak{S}_{i,j}, S\subset\mathcal{T}^{1_{\mathfrak{S}_1}\times w},(1,2)\notin S\}.$$
There is a one-to-one correspondence between
$\mathfrak{S}_{i+1,j-1}$ and $\{w\in
\mathfrak{S}_{i+1,j}|w(i+2)=1\}$ given by  $w\mapsto
\widetilde{w}=(1_{\mathfrak{S}_1}\times w)\circ
(\chi_{i+1,1}\times 1_{\mathfrak{S}_{j-1}})$ for any $w\in
\mathfrak{S}_{i+1,j-1}$. Consequently,
$$\mathcal{S}_2=\{(\widetilde{w},S)|w\in \mathfrak{S}_{i+1,j-1},S\subset\mathcal{T}^{\widetilde{w}},(1,2)\notin S\}.$$
Finally, for any $(w,S)\in \mathcal{S}_3$, we must have that
$w(1)=1$ and $w(i+2)=2$. There is a one-to-one correspondence
between $\mathfrak{S}_{i,j-1}$ and $\{w\in
\mathfrak{S}_{i+1,j}|w(1)=1,w(i+2)=2\}$ given by $w\mapsto
\overline{w}=(1_{\mathfrak{S}_2}\times w)\circ
(1_{\mathfrak{S}_1}\times \chi_{i,1}\times
1_{\mathfrak{S}_{j-1}})$ for any $w\in \mathfrak{S}_{i,j-1}$. So
$$\mathcal{S}_3=\{(\overline{w},S)\in \overline{\mathfrak{S}}_{i+1,j}|w\in \mathfrak{S}_{i,j-1},(1,2)\in S\}.$$

 As a conclusion, the three terms in the final step of the preceding
 computation come from $\mathcal{S}_1$, $\mathcal{S}_2$ and
 $\mathcal{S}_3$ respectively. So we have that $$(a_1\otimes\cdots\otimes a_{i+1})\Join_\sigma
(a_{i+2}\otimes\cdots\otimes
v_{i+j+1})=\sum_{(w,S)\in\overline{\mathfrak{S}}_{i+1,j}}T_{(w,S)}^\sigma(a_1\otimes
\cdots\otimes a_{i+j+1}),$$ which completes the induction.
 \end{proof}

\begin{remark}Let $(A,m)$ be an algebra and $\lambda$ be a scalar in $\mathbb{K}$. Then $(A, \lambda m)$ becomes a braided algebra with respect to the usual flip map. In this case, the formula in Theorem 4.1 coincides with the one of mixable shuffle product introduced in \cite{GK}.\end{remark}

\section{The Subalgebra generated by primitive elements}

Assume again that $(A,m,\sigma)$ is a braided algebra. We denote
by $S^{qsh}_\sigma(A)$ the subalgebra of $T^{qsh}_\sigma(A)$
generated by $A$. To describe this subalgebra, we need to
introduce some notation.

For a fixed $n\in \mathbb{N}$ and any $w\in \mathfrak{S}_n$, we
denote $$\mathcal{S}^w=\{(k,k+1)|1\leq k<n,
w^{-1}(k)<w^{-1}(k+1)\},$$and
$$\overline{\mathfrak{S}}_n=\{(w,S)|w\in \mathfrak{S}_n, S\subset
\mathcal{S}^w\}.$$For any $(w,S)\in \overline{\mathfrak{S}}_n$, we
associate to $S$ a composition $\textrm{cp}(S) $ of $n$ as
follows. Let $S=\{(k_1,k_1+1), \ldots, (k_s,k_s+1)\}$ with
$k_1<\cdots<k_s$. We divide $\{k_1,\ldots,k_s\}$ into several
subsets
$$\{k_1,\ldots,k_{i_1}\}, \{k_{i_1+1},\ldots,k_{i_1+i_2}\},
\ldots,\{k_{i_1+\cdots+i_{r-1}+1},\ldots,k_s\},$$ which obey the
rule that:
\[\left\{
\begin{array}{lll}
k_1+1=k_2,k_2+1=k_3,\ldots,k_{i_1-1}+1=k_{i_1},&&\\
k_{i_1+1}+1=k_{i_1+2},k_{i_1+2}+1=k_{i_1+3},\ldots,k_{i_1+i_2-1}+1=k_{i_2},&&\\
\ldots,&&\\
k_{i_1+\cdots+i_{r-1}+1}+1=k_{i_1+\cdots+i_{r-1}+2},\ldots,
k_{s-1}+1=k_{s},&&
\end{array} \right.
\]
 but $k_{i_1}+1<k_{i_1+1},
k_{i_1+i_2}+1<k_{i_1+i_2+1},\ldots,
k_{i_1+\cdots+i_{r-1}}+1<k_{i_1+\cdots+i_{r-1}+1}$. Denote
$i_r=s-i_1-\cdots-i_{r-1}$. Then we write
 $$\textrm{cp}(S)=(\underbrace{1,\ldots\ldots,1}_{k_1-1\ \textrm{copies}}, i_1+1,\underbrace{1,\ldots\ldots\ldots\ldots,1}_{k_{i_1+1}-k_{i_1}-1\ \textrm{copies}},i_2+1,\ldots,i_r+1,\underbrace{1,\ldots\ldots\ldots,1}_{n-k_{i_r}-1\ \textrm{copies}}).$$For instance, let \[ w=\left(\begin{array}{ccccccccc}
1&2&3&4&5&6&7&8&9\\
1&6&5&4&8&7&2&9&3
\end{array}\right)\in\mathfrak{S}_9,
\]and $S=\{(1,2),(2,3),(6,7),(8,9)\}$. We divide $\{1,2,6,8\}$ into subsets $\{1,2\},\{6\},\{8\}$. Then  $\textrm{cp}(S)=(3,1,1,2,2)$.
We define as before the map
$T_{(w,S)}^\sigma=m_{\textrm{cp}(S)}\circ T_w^\sigma$ for any
$(w,S)\in\overline{\mathfrak{S}}_n$.

Now we provide a decomposition of $\overline{\mathfrak{S}}_{n+1}$
which will be used later. For any $1\leq i\leq n+1$, we denote
$\mathfrak{S}_{n+1}(i)=\{w\in \mathfrak{S}_{n+1}|w(1)=i\}$. It is
clear that $\mathfrak{S}_{n+1}$ is the disjoint union of all
$\mathfrak{S}_{n+1}(i)$'s, and for each $i$ there is a one-to-one
correspondence between $\mathfrak{S}_{n}$ and
$\mathfrak{S}_{n+1}(i)$ given by $w\mapsto
L(w,i)=(\chi_{1,i-1}\times
1_{\mathfrak{S}_{n+1-i}})\circ(1_{\mathfrak{S}_1}\times w) $ for
any $w\in \mathfrak{S}_{n-1}$. So
\begin{eqnarray*}\mathfrak{S}_{n+1}&=&\bigcup_{i=1}^{n+1}\mathfrak{S}_{n+1}(i)\\[3pt]
&=&\bigcup_{i=1}^{n+1}\bigcup_{w\in
\mathfrak{S}_{n}}\{L(w,i)\}\\[3pt]
&=&\bigcup_{w\in
\mathfrak{S}_{n}}\bigcup_{i=1}^{n+1}\{L(w,i)\}.\end{eqnarray*}
Then we have that
\begin{eqnarray*}\overline{\mathfrak{S}}_{n+1}&=&\bigcup_{w\in
\mathfrak{S}_{n}}\bigcup_{i=1}^{n+1}\{(L(w,i),S)|S\subset \mathcal{S}^{L(w,i)}\}\\[3pt]
&=&\big(\bigcup_{w\in
\mathfrak{S}_{n}}\bigcup_{i=1}^{n+1}\{(L(w,i),S)|S\subset
\mathcal{S}^{L(w,i)},(i,i+1)\notin
S\}\big)\\[3pt]
&&\cup\big(\bigcup_{w\in
\mathfrak{S}_{n}}\bigcup_{i=1}^{n+1}\{(L(w,i),S)|S\subset
\mathcal{S}^{L(w,i)},(i,i+1)\in S\}\big).\end{eqnarray*} All the
unions above are disjoint.

Given $w\in \mathfrak{S}_n$ and $S\subset \mathcal{S}^w$ with
$\mathrm{cp}(S)=(i_1,\ldots,i_s)$. For any $0\leq k\leq s$, we
denote
$$\mathrm{cp}(S)_k=(i_1,\ldots,i_k,1,i_{k+1},\ldots,i_s),$$
$$\mathrm{cp}(S)^k=(i_1,\ldots,i_{k-1},i_k+1,i_{k+1},\ldots,i_s),$$
and $$I_k=(\underbrace{1,\ldots,1}_{k\
\textrm{copies}},2,\underbrace{1,\ldots\ldots\ldots,1}_{n-1-|S|-k\
\textrm{copies}}).$$Here, $|S|$ denotes the cardinality of the set
$S$.

\begin{lemma}Under the assumptions above, we have \[\begin{split}
(\beta_{1,k}\otimes \mathrm{id}_A^{\otimes
n-|S|-k})(\mathrm{id}_A\otimes
T_{(w,S)}^\sigma)&=m_{\mathrm{cp}(S)_k}T_{L(w,i_1+\cdots+i_k+1)},\\[5pt]
m_{I_k}(\beta_{1,k}\otimes \mathrm{id}_A^{\otimes
n-|S|-k})(\mathrm{id}_A\otimes
T_{(w,S)}^\sigma)&=m_{\mathrm{cp}(S)^k}T_{L(w,i_1+\cdots+i_k+1)}.
\end{split}
\]\end{lemma}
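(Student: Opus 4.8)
The plan is to isolate a single structural identity expressing how the braiding commutes past iterated multiplications, and then to reduce both formulas to permutation bookkeeping. First I would strip the multiplication out of $T_{(w,S)}^\sigma$: since $T_{(w,S)}^\sigma=m_{\mathrm{cp}(S)}\circ T_w^\sigma$ and $\mathrm{id}_A\otimes T_w^\sigma=T^\sigma_{1_{\mathfrak{S}_1}\times w}$, we have $\mathrm{id}_A\otimes T_{(w,S)}^\sigma=(\mathrm{id}_A\otimes m_{\mathrm{cp}(S)})\,T^\sigma_{1_{\mathfrak{S}_1}\times w}$. Thus the only nontrivial point for the first identity is how $\beta_{1,k}\otimes\mathrm{id}_A^{\otimes n-|S|-k}$ interacts with $\mathrm{id}_A\otimes m_{\mathrm{cp}(S)}$, i.e. how the prepended factor is braided past the multiplication blocks $m^{i_1-1},\dots,m^{i_s-1}$.

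The heart of the argument is the following \emph{single-block} identity, which I would establish by induction on $r$ from the braided-algebra axiom $(m\otimes\mathrm{id}_A)\sigma_2\sigma_1=\sigma(\mathrm{id}_A\otimes m)$ together with associativity of $m$:
\[\sigma(\mathrm{id}_A\otimes m^{r})=(m^{r}\otimes\mathrm{id}_A)\,T^\sigma_{\chi_{1,r+1}},\qquad r\ge 0.\]
Informally this says that braiding one factor past an $(r{+}1)$-fold product equals braiding it past the $r+1$ underlying tensorands one at a time and then remultiplying them. Applying this identity to each of the first $k$ blocks in turn (and leaving the remaining $s-k$ blocks, together with the factors not reached by $\beta_{1,k}$, untouched), I expect to obtain, with $p=i_1+\cdots+i_k$,
\[(\beta_{1,k}\otimes\mathrm{id}_A^{\otimes n-|S|-k})(\mathrm{id}_A\otimes m_{\mathrm{cp}(S)})=m_{\mathrm{cp}(S)_k}\,T^\sigma_{\chi_{1,p}\times 1_{\mathfrak{S}_{n-p}}}.\]
The point is that after the prepended factor has been braided past the first $p$ tensorands it occupies an isolated slot sitting between the $k$-th and $(k{+}1)$-th blocks, which is exactly the extra $1$ that $\mathrm{cp}(S)_k$ inserts into $\mathrm{cp}(S)$.

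It then remains to compose the braid just produced with the one already present, $T^\sigma_{1_{\mathfrak{S}_1}\times w}$, and to recognise the product. Here I would invoke the well-definedness of the lift $T^\sigma$ on the braid group: whenever a product of permutations is length-additive the corresponding lifts multiply, $T^\sigma_u T^\sigma_v=T^\sigma_{uv}$. Since $L(w,p+1)=(\chi_{1,p}\times 1_{\mathfrak{S}_{n-p}})\circ(1_{\mathfrak{S}_1}\times w)$ by the definition of $L$, the first identity follows once the factorization on the right is checked to be reduced, i.e. once the lengths are seen to add. \textbf{This length-additivity bookkeeping is the step I expect to be the main obstacle}, because the blockwise induction and every braid composition above are legitimate only when the relevant permutation products concatenate reduced words without cancellation; making this precise (rather than merely plausible) is where the real work lies. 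The second identity should then come for free: post-composing the first with the single multiplication $m_{I_k}$ multiplies the isolated prepended factor into its neighbouring block, which by associativity of $m$ amounts to the identity $m_{I_k}\circ m_{\mathrm{cp}(S)_k}=m_{\mathrm{cp}(S)^k}$, leaving $T^\sigma_{L(w,i_1+\cdots+i_k+1)}$ unchanged and yielding precisely the claimed formula.
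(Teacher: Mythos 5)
Your proposal follows essentially the same route as the paper's proof: the paper likewise decomposes $T_{(w,S)}^\sigma=m_{\mathrm{cp}(S)}\circ T_w^\sigma$, pushes $\beta_{1,k}$ past the multiplication blocks using exactly your single-block identity $\sigma(\mathrm{id}_A\otimes m^l)=(m^l\otimes \mathrm{id}_A)\beta_{1,l+1}$ (which it cites as Lemma 2 of \cite{B} instead of reproving it by induction), recombines the resulting braid with $T^\sigma_{1_{\mathfrak{S}_1}\times w}$ into $T^\sigma_{L(w,i_1+\cdots+i_k+1)}$, and obtains the second identity from the first via $m_{I_k}\circ m_{\mathrm{cp}(S)_k}=m_{\mathrm{cp}(S)^k}$. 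The length-additivity you flag as the main obstacle is in fact routine (and is treated as immediate by the paper): $\ell\bigl((\chi_{1,p}\times 1_{\mathfrak{S}_{n-p}})\circ(1_{\mathfrak{S}_1}\times w)\bigr)=p+\ell(w)$, since the first factor contributes exactly the inversions involving the prepended letter and the second only those among the remaining letters, so the reduced words concatenate and the lifts compose.
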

\begin{proof}By an easy induction (or see the proof of Prop. 4.21 in \cite{JR}), one can show that $\sigma(\mathrm{id}_A\otimes m^l)=(m^l\otimes
\mathrm{id}_A)\beta_{1,l+1}$ for any $l$. So we have \begin{eqnarray*}\lefteqn{(\beta_{1,k}\otimes
\mathrm{id}_A^{\otimes n-|S|-k})(\mathrm{id}_A\otimes
T_{(w,S)}^\sigma)}\\[3pt]
&=&(\beta_{1,k}\otimes \mathrm{id}_A^{\otimes
n-|S|-k})(\mathrm{id}_A\otimes
m_{\textrm{cp}(S)})(\mathrm{id}_A\otimes T_w^\sigma)\\[3pt]
&=&m_{\textrm{cp}(S)_k}(\beta_{1,i_1+\cdots+i_k}\otimes
\mathrm{id}_A^{\otimes i_{k+1}+\cdots+i_s})(\mathrm{id}_A\otimes T_w^\sigma)\\[3pt]
&=&m_{\textrm{cp}(S)_k}T_{(\chi_{1,i_1+\cdots+i_k}\times
1_{\mathfrak{S}_{i_{k+1}+\cdots+i_s}})\circ(1_{\mathfrak{S}_1}\times
w)}^\sigma\\[3pt]
&=&m_{\mathrm{cp}(S)_k}T_{L(w,i_1+\cdots+i_k+1)}.\end{eqnarray*}

The second equality is a consequence of the first one.\end{proof}

\begin{theorem}Let $(A,m,\sigma)$ be a braided algebra. For any $a_1,\ldots,a_n\in A$, we have that$$a_1\Join_\sigma\cdots\Join_\sigma a_n=\sum_{(w,S)\in\overline{\mathfrak{S}}_n}T_{(w,S)}^\sigma(a_1\otimes \cdots\otimes a_{n}).$$ Therefore $S^{qsh}_\sigma(A)=\sum_{n\geq 0}\mathrm{Im}(\sum_{(w,S)\in\overline{\mathfrak{S}}_n}T_{(w,S)}^\sigma)$.\end{theorem}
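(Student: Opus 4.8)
The plan is to prove the product formula by induction on $n$ and then read off the description of $S^{qsh}_\sigma(A)$ as an immediate corollary. For $n=1$ the claim is trivial, since $\overline{\mathfrak{S}}_1=\{(1_{\mathfrak{S}_1},\emptyset)\}$ and $T^\sigma_{(1_{\mathfrak{S}_1},\emptyset)}=m_{(1)}=\mathrm{id}_A$. For the inductive step I would invoke the associativity of $\Join_\sigma$ (Remark 3.1) to write
$$a_1\Join_\sigma\cdots\Join_\sigma a_{n+1}=a_1\Join_\sigma(a_2\Join_\sigma\cdots\Join_\sigma a_{n+1}),$$
and apply the inductive hypothesis to the inner product, so that
$$a_1\Join_\sigma\cdots\Join_\sigma a_{n+1}=\sum_{(w,S)\in\overline{\mathfrak{S}}_n}a_1\Join_\sigma T^\sigma_{(w,S)}(a_2\otimes\cdots\otimes a_{n+1}).$$
Each summand is the quasi-shuffle of the single letter $a_1$ with the honest tensor $T^\sigma_{(w,S)}(a_2\otimes\cdots\otimes a_{n+1})\in A^{\otimes s}$, where $s=n-|S|$ is the length of $\mathrm{cp}(S)$.

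Next I would expand $a_1\Join_\sigma(b_1\otimes\cdots\otimes b_s)$ using Theorem 4.1 in the case $i=1$, $j=s$. A $(1,s)$-shuffle is determined by the value $p=u(1)\in\{1,\ldots,s+1\}$ and carries at most one admissible pair, namely $(p,p+1)$ when $p\leq s$; its lift is $T^\sigma_u=\beta_{1,p-1}\otimes\mathrm{id}_A^{\otimes s-p+1}$. Hence the expansion splits into $s+1$ \emph{insertion} terms (those with empty admissible set) and $s$ \emph{multiplication} terms (those with the admissible pair selected). Writing $b_1\otimes\cdots\otimes b_s=T^\sigma_{(w,S)}(a_2\otimes\cdots\otimes a_{n+1})$, so that $a_1\otimes b_1\otimes\cdots\otimes b_s=(\mathrm{id}_A\otimes T^\sigma_{(w,S)})(a_1\otimes\cdots\otimes a_{n+1})$, each insertion term becomes $(\beta_{1,k}\otimes\mathrm{id}_A^{\otimes n-|S|-k})(\mathrm{id}_A\otimes T^\sigma_{(w,S)})$ and each multiplication term becomes $m_{I_k}(\beta_{1,k}\otimes\mathrm{id}_A^{\otimes n-|S|-k})(\mathrm{id}_A\otimes T^\sigma_{(w,S)})$. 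These are exactly the two left-hand sides appearing in Lemma 5.2, which rewrites them (for the appropriate range of $k$) as $m_{\mathrm{cp}(S)_k}T^\sigma_{L(w,i_1+\cdots+i_k+1)}$ and $m_{\mathrm{cp}(S)^k}T^\sigma_{L(w,i_1+\cdots+i_k+1)}$ respectively. Thus every term produced is of the form $T^\sigma_{(w',S')}$ for a concrete $(w',S')\in\overline{\mathfrak{S}}_{n+1}$.

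It then remains to check that, as $(w,S)$ runs over $\overline{\mathfrak{S}}_n$ and we collect all insertion and multiplication terms, each $(w',S')\in\overline{\mathfrak{S}}_{n+1}$ is obtained exactly once. This is precisely what the decomposition of $\overline{\mathfrak{S}}_{n+1}$ displayed before Lemma 5.2 is designed to record: every $w'\in\mathfrak{S}_{n+1}$ is uniquely $L(w,i)$ with $w\in\mathfrak{S}_n$ and $i=w'(1)$, and the two branches of that decomposition are governed by whether $(i,i+1)\in S'$. I would match the insertion terms, in which the new letter $a_1$ forms its own block and hence $(i,i+1)\notin S'$, with the first branch, and the multiplication terms, in which $a_1$ is absorbed into the block immediately to its right and hence $(i,i+1)\in S'$, with the second branch; the source $(w,S)$ and the index $k$ are recovered from $(w',S')$ by deleting the first letter and its block from $\mathrm{cp}(S')$. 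Verifying that this assignment is the claimed bijection — keeping the compositions $\mathrm{cp}(S)_k$, $\mathrm{cp}(S)^k$ and the positions $i_1+\cdots+i_k+1$ synchronized with $\mathrm{cp}(S')$ and $w'(1)$ — is the main obstacle and the only place demanding careful combinatorial bookkeeping, with Lemma 5.2 furnishing the matching operator identity term by term. Once the product formula holds, the final assertion is immediate: by definition $S^{qsh}_\sigma(A)$ is the span of all iterated products $a_1\Join_\sigma\cdots\Join_\sigma a_n$ with $n\geq 0$ and $a_i\in A$, and the formula just proved identifies this span with $\sum_{n\geq 0}\mathrm{Im}\big(\sum_{(w,S)\in\overline{\mathfrak{S}}_n}T^\sigma_{(w,S)}\big)$.
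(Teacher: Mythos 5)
Your proposal is correct and follows essentially the same route as the paper's own proof: induction via associativity, expansion of $a_1\Join_\sigma(b_1\otimes\cdots\otimes b_s)$ as the $(1,s)$-case of Theorem 4.1 into insertion and multiplication terms, rewriting via the lemma of Section 5, and matching against the two branches of the displayed decomposition of $\overline{\mathfrak{S}}_{n+1}$ according to whether $(i,i+1)$ lies in $S'$. The bookkeeping you flag as the remaining obstacle is handled in the paper exactly as you sketch it, by exhibiting the unique $S'$ (delete the first letter's block, shifting the pairs beyond position $i$ down by one) and noting that all resulting terms are mutually distinct.
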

\begin{proof}We use induction on $n$.

When $n=2$, it is trivial since
$$\overline{\mathfrak{S}}_2=\{(1_{\mathfrak{S}_2},\emptyset),(1_{\mathfrak{S}_2},\{(1,2)\}),(s_1,\emptyset)
\}.$$ By Theorem 3.5, we have that for any $a_1,\ldots,a_{r+1}\in
A$,
\begin{eqnarray*}\lefteqn{a_1\Join_\sigma (a_2\otimes\cdots\otimes a_{r+1})}\\[3pt]
&=&\sum_{k=0}^r(\beta_{1,k}\otimes \mathrm{id}_A^{\otimes r-k})(a_1\otimes \cdots\otimes a_{r+1})\\[3pt]
&&+\sum_{k=0}^{r-1}(\mathrm{id}_A^{\otimes k}\otimes m\otimes
\mathrm{id}_A^{\otimes r-k-1})(\beta_{1,k}\otimes
\mathrm{id}_A^{\otimes r-k})(a_1\otimes \cdots\otimes
a_{r+1}).\end{eqnarray*} Therefore,
\begin{eqnarray*}\lefteqn{a_1\Join_\sigma\cdots\Join_\sigma
a_{n+1}}\\[3pt]
&=&a_1\Join_\sigma\big(\sum_{(w,S)\in\overline{\mathfrak{S}}_n}T_{(w,S)}^\sigma(a_2\otimes
\cdots\otimes a_{n+1})\big)\\[3pt]
&=&\sum_{(w,S)\in\overline{\mathfrak{S}}_n}\sum_{k=0}^{n-|S|}(\beta_{1,k}\otimes
\mathrm{id}_A^{\otimes n-|S|-k})\big(a_1\otimes
T_{(w,S)}^\sigma(a_2\otimes
\cdots\otimes a_{n+1})\big)\\[3pt]
&&+\sum_{(w,S)\in\overline{\mathfrak{S}}_n}\sum_{k=0}^{n-|S|-1}(\mathrm{id}_A^{\otimes
k}\otimes m\otimes \mathrm{id}_A^{\otimes
n-|S|-k-1})\\[3pt]
&&\ \ \ \ \ \ \ \ \ \ \ \ \ \ \ \ \ \ \  \ \ \ \ \ \ \ \ \
\circ(\beta_{1,k}\otimes \mathrm{id}_A^{\otimes
n-|S|-k})\big(a_1\otimes T_{(w,S)}^\sigma(a_2\otimes \cdots\otimes
a_{n+1})\big)\\[3pt]
&=&\sum_{(w,S)\in\overline{\mathfrak{S}}_n}\sum_{k=0}^{n-|S|}m_{\textrm{cp}(S)_k}T_{L(w,i_1+\cdots+i_k+1)}^\sigma(a_1\otimes
\cdots\otimes a_{n+1})\\[3pt]
&&+
\sum_{(w,S)\in\overline{\mathfrak{S}}_n}\sum_{k=0}^{n-|S|-1}m_{\textrm{cp}(S)^k}T_{L(w,i_1+\cdots+i_k+1)}^\sigma(a_1\otimes
\cdots\otimes a_{n+1}),\end{eqnarray*} where the last equality
follows from the preceding lemma.

On the other hand, by the decomposition of
$\overline{\mathfrak{S}}_{n+1}$ mentioned before,
\begin{eqnarray*}\lefteqn{\sum_{(w,S)\in\overline{\mathfrak{S}}_{n+1}}T_{(w,S)}^\sigma(a_1\otimes
\cdots\otimes a_{n+1})}\\[3pt]
&=&\sum_{w\in\mathfrak{S}_n}\sum_{i=1}^{n+1}\sum_{\substack{S\subset
\mathcal{S}^{L(w,i)}\\(i,i+1)\notin S}}T_{(L(w,i),S)}^\sigma
(a_1\otimes
\cdots\otimes a_{n+1})\\[3pt]
&&+\sum_{w\in\mathfrak{S}_n}\sum_{i=1}^{n+1}\sum_{\substack{S\subset
\mathcal{S}^{L(w,i)}\\(i,i+1)\in
S}}T_{(L(w,i),S)}^\sigma(a_1\otimes \cdots\otimes
a_{n+1}).\end{eqnarray*}We compare the terms in these two
expressions. Notice that for a fixed $1\leq i\leq n+1$ and
$S\subset \mathcal{S}^{L(w,i)}$ with $(i,i+1)\notin S$, there is a
unique $S'\in \mathcal{S}^w$ such that
$\textrm{cp}(S)=\textrm{cp}(S')_i$. Indeed, we can write down
$S^\prime$ explicitly: if $S=\{(k_1,k_1+1), \ldots, (k_s,k_s+1)\}$
with $k_1<\cdots<k_l<i<k_{l+1}<\cdots<k_s$, then
$S'=\{(k_1,k_1+1), \ldots,
(k_l,k_l+1),(k_{l+1}-1,k_{l+1}),\ldots,(k_s-1,k_s)\}$. Similarly,
if $(i,i+1)\in S$, there is a unique $S''\in \mathcal{S}^w$ such
that $\textrm{cp}(S)=\textrm{cp}(S'')^i$. It follows that every
term in
$\sum_{(w,S)\in\overline{\mathfrak{S}}_{n+1}}T_{(w,S)}^\sigma(a_1\otimes
\cdots\otimes a_{n+1})$ is from exactly one term in the formula of
$a_1\Join_\sigma\cdots\Join_\sigma a_{n+1}$. The converse is also
true. Since all terms in each formula are mutually distinct, we
get the conclusion.
\end{proof}

\begin{remark}Consider Example 3.2 and let $(\mathfrak{X},[,],\sigma)$ be the braided algebra introduced there. For any $w\in \mathfrak{S}_n$, we denote $\iota(w)=\{(i,j)|1\leq i<j\leq n, w(i)>w(j)\}$. Then for any $a_1,\cdots, a_n\in X$, $$T_w^\sigma(a_1\otimes\cdots\otimes a_n)=q^{\sum_{(i,j)\in \iota(w)}|a_i||a_j|}a_{w^{-1}(1)}\otimes \cdots \otimes a_{w^{-1}(n)}.$$

For any two compositions $I=(i_1,\ldots,i_k)$ and
$J=(j_1,\ldots,j_l)$ of $n$, we say $I$ is a \emph{refinement} of
$J$, written by $I\succeq J$, if there are $r_1,\ldots,r_l\in
\mathbb{N}$ such that $r_1+\cdots +r_l=k$ and
 $$i_1+\cdots+i_{r_1}=j_1, i_{r_1+1}+\cdots+i_{r_1+r_2}=j_2,\ldots, i_{r_1+\cdots+r_{l-1}+1}+\cdots+i_k=j_l.$$
 For instance, $(1,2,2,3)\succeq (3,2,3)$. For any $w\in \mathfrak{S}_n$, let $C(w)$ be the composition $(i_1,\ldots,i_k)$ of $n$ such that $$\{i_1,i_1+i_2,\ldots, i_1+\cdots+i_{k-1}\}=\{l|1\leq l\leq n-1, w(l)>w(l+1)\}.$$
For any $I=(i_1,\ldots,i_l)\in \mathcal{C}(n)$, we write
$I[a_1\otimes \cdots a_n]=[,]_I(a_1\otimes \cdots a_n)$.

Observing that for any $w\in \mathfrak{S}_n$ there is a one-to-one
correspondence between $S\subset \mathcal{S}^w$ and $I\in
\mathcal{C}(n)$ with $I\succeq C(w)$, one has immediately that
$$a_1\ast_q\cdots\ast_q
a_n=\sum_{w\in\mathfrak{S}_n}q^{\sum_{(i,j)\in
\iota(w)}|a_i||a_j|}\sum_{I\succeq C(w)}I[a_{w^{-1}(1)}\otimes
\cdots \otimes a_{w^{-1}(n)}].$$ This formula is given by Hoffman
when $[,]$ is commutative (see Lemma 5.2 in \cite{Hof}).
\end{remark}

We conclude this section by an interesting formula. Let $V$ be a
vector space with basis $\{e_i\}_{i\in \mathbb{N}}$, and
$(V,m,\sigma)$ be the braided algebra structure given in Example
2.3. So we have that $\sigma(e_i\otimes e_j)=q_{ij}e_j\otimes e_i$
and $m(e_i\otimes e_j)=e_{i+j}$.

\begin{proposition}For any $i,k\in \mathbb{N}$, we have that $$e_i^{\Join_\sigma k}=\sum_{n=1}^k\sum_{\substack{l_1+\cdots+l_n=k\\1\leq l_1,\ldots,l_n\leq k}}\frac{[k]_{q_{ii}}!}{[l_1]_{q_{ii}}!\cdots [l_n]_{q_{ii}}!}e_{l_1i}\otimes \cdots \otimes e_{l_ni}.$$\end{proposition}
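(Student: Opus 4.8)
The plan is to compute $e_i^{\Join_\sigma k}$ by applying Theorem 5.3 to the special braided algebra of Example 2.3, where $\sigma(e_a\otimes e_b)=q_{ab}e_b\otimes e_a$ and $m(e_a\otimes e_b)=e_{a+b}$. By Theorem 5.3,
$$e_i^{\Join_\sigma k}=\sum_{(w,S)\in\overline{\mathfrak{S}}_k}T_{(w,S)}^\sigma(e_i\otimes\cdots\otimes e_i),$$
with all $k$ tensor factors equal to $e_i$. The key simplification is that since every input letter is the same $e_i$, the permutation part $T_w^\sigma$ only contributes a scalar and returns $e_i^{\otimes k}$, and then $m_{\mathrm{cp}(S)}$ collapses consecutive blocks of $e_i$'s into single $e_{li}$'s via the multiplication rule $m^{l-1}(e_i^{\otimes l})=e_{li}$. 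So I expect the whole sum to reduce to a scalar-weighted sum over compositions of $k$, matching the right-hand side.

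**First I would** pin down the scalar produced by $T_w^\sigma$ on $e_i^{\otimes k}$. Using the braiding relation iteratively (each elementary crossing $\sigma(e_i\otimes e_i)=q_{ii}e_i\otimes e_i$ contributes a factor $q_{ii}$), one sees that $T_w^\sigma(e_i^{\otimes k})=q_{ii}^{\ell(w)}\,e_i^{\otimes k}$, where $\ell(w)$ is the length of $w$ (the number of inversions); indeed the remark after Theorem 5.3 gives $T_w^\sigma(a_1\otimes\cdots\otimes a_n)=q^{\sum_{(a,b)\in\iota(w)}\cdots}$-type formulas, and here the exponent collapses to the inversion count $|\iota(w)|=\ell(w)$ since all factors coincide. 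Applying $m_{\mathrm{cp}(S)}$ then sends $e_i^{\otimes k}$ to the monomial determined by $\mathrm{cp}(S)=(l_1,\ldots,l_n)$, namely $e_{l_1 i}\otimes\cdots\otimes e_{l_n i}$. Thus each pair $(w,S)$ contributes $q_{ii}^{\ell(w)}$ times the monomial indexed by $\mathrm{cp}(S)$.

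**The main obstacle will be** the combinatorial bookkeeping: collecting, for a fixed target composition $(l_1,\ldots,l_n)$ of $k$, the total weight $\sum q_{ii}^{\ell(w)}$ summed over all $(w,S)\in\overline{\mathfrak{S}}_k$ whose block structure $\mathrm{cp}(S)$ equals $(l_1,\ldots,l_n)$, and showing this equals the $q$-multinomial coefficient $\tfrac{[k]_{q_{ii}}!}{[l_1]_{q_{ii}}!\cdots[l_n]_{q_{ii}}!}$. The strategy here is to reorganize: for each composition $(l_1,\ldots,l_n)$ the admissible data $(w,S)$ with that block type correspond to interleavings (shuffles) of $n$ groups of sizes $l_1,\ldots,l_n$, so the sum of $q_{ii}^{\ell(w)}$ over the relevant permutations is exactly the Gaussian $q$-multinomial coefficient. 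I would invoke the standard generating-function identity $\sum_{w}q^{\ell(w)}=\binom{k}{l_1,\ldots,l_n}_q=\tfrac{[k]_q!}{[l_1]_q!\cdots[l_n]_q!}$, where $w$ ranges over minimal-length coset representatives (i.e.\ shuffles) of the Young subgroup $\mathfrak{S}_{l_1}\times\cdots\times\mathfrak{S}_{l_n}$ in $\mathfrak{S}_k$, specialized at $q=q_{ii}$.

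**To finish**, I would sum over all compositions. Writing $n$ for the number of blocks and $(l_1,\ldots,l_n)$ for the block sizes with $l_1+\cdots+l_n=k$ and each $l_r\geq 1$, the above gives
$$e_i^{\Join_\sigma k}=\sum_{n=1}^{k}\ \sum_{\substack{l_1+\cdots+l_n=k\\ 1\leq l_1,\ldots,l_n\leq k}}\frac{[k]_{q_{ii}}!}{[l_1]_{q_{ii}}!\cdots[l_n]_{q_{ii}}!}\,e_{l_1 i}\otimes\cdots\otimes e_{l_n i},$$
which is the claimed formula. The one point requiring care is matching the description of $\mathrm{cp}(S)$ from Section 5 (built from maximal runs of consecutive indices in $S$) with the block sizes $l_r$, and checking that as $(w,S)$ ranges over $\overline{\mathfrak{S}}_k$ each target composition is hit by precisely the shuffle set whose $q$-length generating function is the Gaussian multinomial; once that dictionary is fixed the identity is immediate.
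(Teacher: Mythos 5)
Your proposal is correct, and it supplies the details that the paper itself omits: the paper's entire proof of this proposition is the one-line remark that it is ``a direct verification by using induction and Theorem [the $n$-fold product formula of Section 5] or Hoffman's formula.'' Your argument is a fleshed-out version of the non-inductive route: apply the formula $a_1\Join_\sigma\cdots\Join_\sigma a_n=\sum_{(w,S)\in\overline{\mathfrak{S}}_n}T^\sigma_{(w,S)}(a_1\otimes\cdots\otimes a_n)$ with all $a_r=e_i$, note that $T^\sigma_w$ acts on $e_i^{\otimes k}$ by the scalar $q_{ii}^{\ell(w)}$, observe that $S\mapsto\mathrm{cp}(S)$ identifies the possible $S$ bijectively with compositions of $k$, and then collect, for each fixed composition, the weight $\sum_w q_{ii}^{\ell(w)}$ via the Gaussian $q$-multinomial identity. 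The paper's hinted alternative is an induction on $k$, using $e_i^{\Join_\sigma(k+1)}=e_i\Join_\sigma e_i^{\Join_\sigma k}$ together with the $q$-Pascal recurrence for the multinomial coefficients; that route trades your closed-form combinatorial identity for more elementary but longer bookkeeping. One point you rightly flag but should make explicit when writing this up: the condition $S\subset\mathcal{S}^w$ says that $w^{-1}(j)<w^{-1}(j+1)$ whenever $j,j+1$ lie in the same block, i.e.\ it is $w^{-1}$, not $w$, that is increasing on the blocks and hence is the minimal coset representative (shuffle); since $\ell(w)=\ell(w^{-1})$, the generating-function identity $\sum_u q^{\ell(u)}=\tfrac{[k]_q!}{[l_1]_q!\cdots[l_n]_q!}$ transfers to your sum unchanged, so the gap is only notational.
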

\begin{proof}It is a direct verification by using induction and Theorem 5.2 or Hoffman's formula.\end{proof}

\section{The dual construction}
In this section, we give a dual construction of the quantum
quasi-shuffle algebra by using the universal property of tensor
algebras. First of all, we study a special coalgebra structure on
$T(C)$ which is a generalization of the quantized cofree coalgebra
structure. For coalgebras, we adopt Sweedler's notation. That
means for a coalgebra $(C,\bigtriangleup,\varepsilon)$ and any
$c\in C$, we denote
$$\bigtriangleup(c)=\sum_{(c)}c_{(1)}\otimes c_{(2)},$$
or simply, $\bigtriangleup(c)=c_{(1)}\otimes c_{(2)}$.

To extend algebra structures to quantized case, one needs the
notion of braided algebras. By contrast, in the case of
coalgebras, one needs the so-called braided coalgebras.

\begin{definition}Let $C=(C,\bigtriangleup,\varepsilon)$ be a coalgebra with coproduct $\bigtriangleup$ and counit $\varepsilon$, and $\sigma$ be a braiding on $C$. We call $(C,\bigtriangleup,\sigma)$ a \emph{braided coalgebra} if it satisfies the following conditions:
\[
\begin{split}
(\mathrm{id}_C\otimes \bigtriangleup)\sigma&=\sigma_1\sigma_2(
\bigtriangleup\otimes
\mathrm{id}_C),\\[3pt] ( \bigtriangleup\otimes
\mathrm{id}_C)\sigma&=\sigma_2\sigma_1(\mathrm{id}_C\otimes
\bigtriangleup). \end{split}
\]

\end{definition}

In order to get new braided algebras and coalgebras from old ones,
we need the proposition below.
\begin{proposition}[\cite{HH}, Proposition 4.2]1.
For a braided algebra $(A,\mu,\sigma)$ and any $i\in \mathbb{N}$,
$(A^{\otimes i},\mu_{\sigma,i}, \beta_{ii})$ becomes a braided
algebra with product $\mu_{\sigma,i}=\mu^{\otimes i}\circ
T^\sigma_{w_i}$, where $w_i\in \mathfrak{S}_{2i}$ is given by
\[w_{i}=\left(\begin{array}{ccccccccc}
1&2&3&\cdots&i&i+1&i+2&\cdots & 2i\\
1&3&5&\cdots&2i-1&2& 4 &\cdots & 2i
\end{array}\right).\]

2. For a braided coalgebra $(C, \bigtriangleup,\sigma)$,
$(C^{\otimes i},\bigtriangleup_{\sigma,i}, \beta_{ii})$ becomes a
braided coalgebra with coproduct $\bigtriangleup_{\sigma,i}=
T^\sigma_{w_i^{-1}}\circ\bigtriangleup^{\otimes i}$ and counit
$\varepsilon^{\otimes i}:C^{\otimes i}\rightarrow
\mathbb{K}^{\otimes i}\simeq \mathbb{K}$.\end{proposition}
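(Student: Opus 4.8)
The plan is to establish part 1 in full and then obtain part 2 by a formal dualization. For part 1 there are two things to check: that $\mu_{\sigma,i}$ is associative, and that it satisfies the two braided-algebra compatibility conditions of Definition 2.1 with respect to the braiding $\beta_{ii}$. Throughout I would work with the braid-diagram calculus, in which $T^\sigma_w$ is the braid on $2i$ (resp.\ $3i$) strands lifting $w$, $\mu^{\otimes i}$ is a row of $i$ multiplication nodes, and $\beta_{ii}=T^\sigma_{\chi_{ii}}$ is the block-transposition braid. The three admissible moves are: the braid relations together with the well-definedness of the lift $w\mapsto T^\sigma_w$ (strands slide past crossings), associativity of $\mu$ (adjacent nodes merge into $m^2$), and the two axioms of Definition 2.1 for $(A,\mu,\sigma)$ (a multiplication node may be pushed through a single crossing, trading it for $\sigma_1\sigma_2$ or $\sigma_2\sigma_1$).

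For associativity I would expand both composites $\mu_{\sigma,i}(\mu_{\sigma,i}\otimes\mathrm{id}_{A^{\otimes i}})$ and $\mu_{\sigma,i}(\mathrm{id}_{A^{\otimes i}}\otimes\mu_{\sigma,i})$ on $A^{\otimes 3i}$. In each case the inner product first interleaves two of the three length-$i$ blocks and multiplies them, after which the braided-algebra axioms allow the resulting nodes to be slid past the remaining crossings so that all products are postponed to the top. Both composites thereby reduce to $(m^2)^{\otimes i}$ — a row of $i$ triple products — precomposed with a single braid lift of the permutation of $\mathfrak{S}_{3i}$ interleaving the three blocks into the pattern $a_1,b_1,c_1,\ldots,a_i,b_i,c_i$. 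The two permutations obtained coincide and each arises from a reduced word, so the two braid lifts are equal, and associativity of $\mu$ identifies the node rows; hence the two composites agree.

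The main work — and the step I expect to be the real obstacle — is the first compatibility condition, namely $(\mathrm{id}_{A^{\otimes i}}\otimes\mu_{\sigma,i})(\beta_{ii})_1(\beta_{ii})_2=\beta_{ii}(\mu_{\sigma,i}\otimes\mathrm{id}_{A^{\otimes i}})$ on $(A^{\otimes i})^{\otimes 3}=A^{\otimes 3i}$. After substituting $\mu_{\sigma,i}=\mu^{\otimes i}T^\sigma_{w_i}$ and $\beta_{ii}=T^\sigma_{\chi_{ii}}$, both sides become $\mu^{\otimes i}$ precomposed with a braid lift, once the multiplication nodes on each side have been transported to the top using the Definition 2.1 axioms for $(A,\mu,\sigma)$. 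The identity then reduces to checking that the two underlying elements of $\mathfrak{S}_{3i}$ coincide and that the chosen expressions are reduced, so that the well-definedness of the braid lift $w\mapsto T^\sigma_w$ applies; the bookkeeping of these permutations is precisely the delicate point. The second compatibility condition is verified symmetrically.

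Finally, part 2 follows from part 1 by dualization. Reversing all arrows exchanges $\mu$ with $\bigtriangleup$ and reverses the order of composition, while turning a braid diagram upside down sends $T^\sigma_w$ to $T^\sigma_{w^{-1}}$ with the same $\sigma$; under this operation the product $\mu^{\otimes i}\circ T^\sigma_{w_i}$ becomes precisely $T^\sigma_{w_i^{-1}}\circ\bigtriangleup^{\otimes i}=\bigtriangleup_{\sigma,i}$, and the two braided-algebra axioms of Definition 2.1 are carried exactly to the two braided-coalgebra axioms of Definition 6.1. Consequently every diagrammatic move used in part 1 has a valid upside-down counterpart, and reading the proof of part 1 in reverse shows that $\bigtriangleup_{\sigma,i}$ is coassociative and compatible with $\beta_{ii}$; that $\varepsilon^{\otimes i}$ is a counit is immediate. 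This establishes the proposition.
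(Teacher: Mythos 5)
Your proposal cannot be compared against an internal proof, because the paper offers none: this proposition is quoted as a known result from \cite{HH} (Hashimoto--Hayashi, Proposition 4.2) and used as a black box in Section 6. Judged on its own terms, your strategy for part 1 is the standard and correct one: the two axioms of Definition 2.1 are exactly the moves that let a multiplication node slide through a crossing (doubling it into $\sigma_1\sigma_2$ or $\sigma_2\sigma_1$), so associativity and both compatibility identities collapse to equalities of the form $\mu^{\otimes i}\circ T^\sigma_u=\mu^{\otimes i}\circ T^\sigma_v$ (or $(m^2)^{\otimes i}\circ T^\sigma_u$), which Matsumoto's lemma (Theorem 4.12 of \cite{KT}) settles once $u=v$ and the words used are reduced. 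Two caveats: you should record that $\beta_{ii}=T^\sigma_{\chi_{ii}}$ itself satisfies the quantum Yang--Baxter equation on $(A^{\otimes i})^{\otimes 3}$, since that is part of the assertion ``braided algebra'' (it is a braid-group fact, independent of $\mu$); and the permutation bookkeeping that you yourself identify as ``the real obstacle'' is exactly the content of the proof and is left unexecuted, so part 1 is a sound plan rather than a finished argument.

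The genuine gap is the counit claim in part 2. Part 1, as stated, asserts nothing about units, so ``reading the proof of part 1 upside down'' cannot produce the statement that $\varepsilon^{\otimes i}$ is a counit for $\bigtriangleup_{\sigma,i}$, and that statement is not immediate: verifying $(\varepsilon^{\otimes i}\otimes\mathrm{id}_{C^{\otimes i}})\,T^\sigma_{w_i^{-1}}\bigtriangleup^{\otimes i}=\mathrm{id}_{C^{\otimes i}}$ requires pushing the $\varepsilon$'s through the crossings of $T^\sigma_{w_i^{-1}}$, i.e.\ it requires the compatibilities $(\mathrm{id}_C\otimes\varepsilon)\sigma=\varepsilon\otimes\mathrm{id}_C$ and $(\varepsilon\otimes\mathrm{id}_C)\sigma=\mathrm{id}_C\otimes\varepsilon$ (as maps $C\otimes C\to C$), which are \emph{not} among the axioms of Definition 6.1. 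They are, fortunately, consequences of those axioms: applying $\mathrm{id}_C\otimes\mathrm{id}_C\otimes\varepsilon$ to the first axiom of Definition 6.1 turns the left side into $\sigma$ (by the counit axiom of $C$) and the right side into $\sigma\circ\bigl(\mathrm{id}_C\otimes[(\mathrm{id}_C\otimes\varepsilon)\sigma]\bigr)\circ(\bigtriangleup\otimes\mathrm{id}_C)$; cancelling the invertible $\sigma$ and then composing with $\varepsilon\otimes\mathrm{id}_C$ gives $(\mathrm{id}_C\otimes\varepsilon)\sigma=\varepsilon\otimes\mathrm{id}_C$, and the second identity follows symmetrically from the other axiom. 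With these in hand your dualization argument does deliver coassociativity and the braiding compatibility (note only that duality interchanges the two axioms of Definition 2.1 with the two of Definition 6.1 in swapped order, and sends $T^\sigma_w$ to $T^\sigma_{w^{-1}}$ exactly as you say), and the counit check becomes the routine diagram chase you asserted. So the proof is repairable, but as written the counit step is missing rather than immediate.
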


Let $(C,\bigtriangleup,\sigma)$ be a braided coalgebra. Consider
the tensor algebra $T(C)$ with the concatenation product $m$. Then
by
 the above proposition, $\mathscr{T} _\beta^2(C)=(T(C)\underline{\otimes}
 T(C),m_{\beta,2})$ and $\mathscr{T} _\beta^3=(T(C)\underline{\otimes}
 T(C)\underline{\otimes}
 T(C),m_{\beta,3})$ are associative algebras.

 We define
\[\begin{array}{cccc}
\phi_1: &C&\rightarrow& T(C)\underline{\otimes}
 T(C)
,\\[6pt]
&c&\mapsto&1\underline{\otimes} c+ c\underline{\otimes}1.
\end{array}\]

By the universal property of tensor algebras, there exists an
algebra map $\Phi_1: T(C)\rightarrow \mathscr{T} _\beta^2(C)$
whose restriction on $C$ is $\phi_1$. Moreover $\Phi_1$ is
coassociative and is the dual of quantum shuffle product (see,
e.g., \cite{FG}).

Now we define
\[\begin{array}{cccc}
\phi_2: &C&\rightarrow& T(C)\underline{\otimes}
 T(C)
,\\[6pt]
&c&\mapsto&\sum c_{(1)}\underline{\otimes} c_{(2)}.
\end{array}\]

By the universal property of tensor algebras again, there exists
an algebra map $\Phi_2: T(C)\rightarrow \mathscr{T} _\beta^2(C)$
whose restriction on $C$ is $\phi_2$.

\begin{proposition}For any $i\in \mathbb{N}$, we have that $\Phi_2\mid_{C^{\otimes
i}}=\bigtriangleup_{\sigma,i}$. So $(T(C), \Phi_2, \beta)$ is a
braided coalgebra.\end{proposition}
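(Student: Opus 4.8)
The plan is to prove the equality $\Phi_2\mid_{C^{\otimes i}}=\bigtriangleup_{\sigma,i}$ by induction on $i$, and then to read off the braided coalgebra structure from Proposition 6.2(2). The base case $i=1$ is immediate: the permutation $w_1\in\mathfrak{S}_2$ is the identity, so $\bigtriangleup_{\sigma,1}=T^\sigma_{w_1^{-1}}\bigtriangleup^{\otimes 1}=\bigtriangleup=\phi_2=\Phi_2\mid_C$.

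For the inductive step I would exploit that $\Phi_2$ is, by construction, an algebra map from $(T(C),m)$ to $\mathscr{T}^2_\beta(C)$. Writing a homogeneous element $c_1\otimes\cdots\otimes c_{i+1}\in C^{\otimes(i+1)}$ as the concatenation product $(c_1\otimes\cdots\otimes c_i)\cdot c_{i+1}$ in $T(C)$ and invoking the induction hypothesis together with the base case, multiplicativity gives
\[\Phi_2\mid_{C^{\otimes(i+1)}}=m_{\beta,2}\circ\big(\Phi_2\mid_{C^{\otimes i}}\,\underline{\otimes}\,\Phi_2\mid_C\big)=m_{\beta,2}\circ\big(\bigtriangleup_{\sigma,i}\,\underline{\otimes}\,\bigtriangleup\big).\]
Everything therefore reduces to the single-step recursion $\bigtriangleup_{\sigma,i+1}=m_{\beta,2}\circ(\bigtriangleup_{\sigma,i}\,\underline{\otimes}\,\bigtriangleup)$.

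To establish this recursion I would unfold both sides. Since $m_{\beta,2}=m^{\otimes 2}\circ T^\beta_{w_2}$ with $w_2=s_2\in\mathfrak{S}_4$, the operator $T^\beta_{w_2}$ is the single braiding $\beta$ on the two inner copies of $T(C)$, which on the relevant component is $\beta_{i,1}=T^\sigma_{\chi_{i,1}}$. Substituting $\bigtriangleup_{\sigma,i}=T^\sigma_{w_i^{-1}}\bigtriangleup^{\otimes i}$ and $\bigtriangleup=\bigtriangleup^{\otimes 1}$ and using the trivial factorization $\bigtriangleup^{\otimes(i+1)}=\bigtriangleup^{\otimes i}\otimes\bigtriangleup$ (no reassociation is needed, as $\bigtriangleup$ is applied factorwise), the right-hand side becomes $m^{\otimes 2}$ precomposed with $T^\sigma_{(1_{\mathfrak{S}_i}\times\chi_{i,1}\times 1_{\mathfrak{S}_1})}\,T^\sigma_{(w_i^{-1}\times 1_{\mathfrak{S}_2})}$ applied to $\bigtriangleup^{\otimes(i+1)}$, whereas the left-hand side is $T^\sigma_{w_{i+1}^{-1}}\bigtriangleup^{\otimes(i+1)}$. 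The crux is thus the braid-group identity
\[w_{i+1}^{-1}=(1_{\mathfrak{S}_i}\times\chi_{i,1}\times 1_{\mathfrak{S}_1})\circ(w_i^{-1}\times 1_{\mathfrak{S}_2})\]
together with the reducedness of the right-hand expression, so that the lift $T^\sigma$ of the composite factors as the product of the individual lifts. I would prove the equality of permutations by tracking where the two interleaving patterns send each strand, and I would confirm reducedness through the length count $\ell(w_{i+1}^{-1})=\binom{i+1}{2}=\binom{i}{2}+i=\ell(w_i^{-1})+\ell(\chi_{i,1})$. This reduced-word bookkeeping — matching the global de-interleaving $w_{i+1}^{-1}$ with the local step that splits off the last coproduct factor and braids it into place — is the main obstacle and the genuine combinatorial heart of the argument; it is of the same nature as the reduced-expression manipulations used in the proof of Theorem 4.1.

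Finally, for the assertion that $(T(C),\Phi_2,\beta)$ is a braided coalgebra, I would verify the two axioms of Definition 6.1 componentwise on $C^{\otimes i}\,\underline{\otimes}\,C^{\otimes j}$, using the identification $\Phi_2\mid_{C^{\otimes k}}=\bigtriangleup_{\sigma,k}$ just established and the fact that $\beta$ restricts there to $\beta_{ij}$. The diagonal case $j=i$ is precisely Proposition 6.2(2), while the case $i=j=1$ is, since $\beta_{11}=\sigma$, exactly the braided coalgebra axiom for $(C,\bigtriangleup,\sigma)$. The general component then follows by sliding this single-strand compatibility across the blocks: both $\bigtriangleup_{\sigma,i}$ and the block braidings $\beta_{ij}$ are composites of lifts $T^\sigma_w$ of single-strand crossings, so the axiom propagates past an extra block $C^{\otimes j}$ using only the Yang--Baxter relation for $\sigma$ together with Definition 6.1 for $\bigtriangleup$; this is the coalgebra counterpart of the coassociativity argument that already identifies $\Phi_1$ with the dual of the quantum shuffle. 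I expect this last step to be routine but diagrammatically tedious, and it is cleanest to organize it as a naturality statement for the braiding rather than as a brute-force index computation.
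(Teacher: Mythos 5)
Your proposal is correct and follows essentially the same route as the paper: induction on $i$ using that $\Phi_2$ is an algebra map, reducing the inductive step to a reduced-word identity for the lifts $T^\sigma$ --- the only difference being that you peel off the last tensor factor (giving $w_{i+1}^{-1}=(1_{\mathfrak{S}_i}\times\chi_{i,1}\times 1_{\mathfrak{S}_1})\circ(w_i^{-1}\times 1_{\mathfrak{S}_2})$) while the paper peels off the first (giving $w_n^{-1}=(1_{\mathfrak{S}_1}\times\chi_{1,n-1}\times 1_{\mathfrak{S}_{n-1}})\circ(1_{\mathfrak{S}_2}\times w_{n-1}^{-1})$), a mirror image of the same computation, and both your permutation identity and the length count $\binom{i+1}{2}=\binom{i}{2}+i$ check out. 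Your componentwise treatment of the concluding braided-coalgebra assertion is also sound, and is in fact more explicit than the paper, which deduces that claim from Proposition 6.2 without further argument.
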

\begin{proof}We use induction on $i$. When $i=1$, it is trivial. Assume the equality holds for the case $i< n$.
Then for any $c_1,\ldots,c_n\in C$, we have
\begin{eqnarray*}\lefteqn{\Phi_2(c_1\otimes \cdots \otimes
c_n)}\\[3pt]
&=&m_{\beta,2}\big(\Phi_2(c_1)\otimes\Phi_2(c_2\otimes \cdots \otimes c_n)\big)\\[3pt]
&=&(\mathrm{id}_{T(C)}\otimes \beta \otimes \mathrm{id}_{T(C)})(\bigtriangleup(c_1)\otimes T^\sigma_{w_{n-1}^{-1}}\circ\bigtriangleup^{\otimes n-1}(c_2\otimes \cdots \otimes c_n)\big)\\[3pt]
&=&(\mathrm{id}_C\otimes T^\sigma_{\chi_{1,n-1}}\otimes
\mathrm{id}_C^{\otimes n-1})(\mathrm{id}_C^{\otimes 2}\otimes
T^\sigma_{w_{n-1}^{-1}})\bigtriangleup^{\otimes n}(c_1\otimes
\cdots \otimes
c_n)\\[3pt]
&=&T^\sigma_{w_n^{-1}}\circ\bigtriangleup^{\otimes n}(c_2\otimes
\cdots \otimes c_n),\end{eqnarray*}where the last equality follows
from the fact that $(1_{\mathfrak{G}_1 }\times\chi_{1,n-1}\times
1_{\mathfrak{G}_{}n-1 })(1_{\mathfrak{G}_2 }\times
w_{n-1}^{-1})=w_{n}^{-1}$ and the expression is
reduced.\end{proof}

Let $\phi=\phi_1+\phi_2: C\rightarrow\mathscr{T} _\beta^2(C)$ and
$\Phi$ be the algebraic map induced by the universal property of
tensor algebras which extends $\phi$.

\begin{proposition}Under the notation above, the triple $(T(C), \Phi, \beta)$ is a braided coalgebra.
\end{proposition}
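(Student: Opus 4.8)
The statement asserts that $\Phi$ makes $T(C)$ a braided coalgebra, so three things have to be established: that $(T(C),\Phi)$ is coassociative, that it is counital, and that the pair $(\Phi,\beta)$ satisfies the two conditions of Definition 6.1 (with $\bigtriangleup,\sigma$ replaced by $\Phi,\beta$). The organizing principle is that $\Phi$ is, by construction, an algebra map $T(C)\to\mathscr{T}_\beta^2(C)$, so each identity can be reduced to a finite check on the generating space $C$ via the universal property of the tensor algebra, together with the braided tensor powers of Proposition 6.2; on $C$ one has $\Phi|_C=\phi$, where $\phi(c)=1\underline{\otimes}c+c\underline{\otimes}1+\bigtriangleup(c)$. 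For the counit I would take the augmentation $\varepsilon_T\colon T(C)\to\mathbb{K}$ with $\varepsilon_T(1)=1$ and $\varepsilon_T|_{C^{\otimes n}}=0$ for $n\geq1$; as $\varepsilon_T$ is itself an algebra map, the two counit axioms become identities of algebra endomorphisms of $T(C)$ and so need only be verified on $C$, where $\phi(c)$ collapses to $c$ at once.

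For coassociativity I would argue that $(\Phi\underline{\otimes}\mathrm{id})\Phi$ and $(\mathrm{id}\underline{\otimes}\Phi)\Phi$ are both algebra maps $T(C)\to\mathscr{T}_\beta^3(C)$. This uses the braided algebra structures on the tensor square and cube supplied by Proposition 6.2, together with the fact that $\Phi$ is a morphism of braided algebras, so that $\Phi\underline{\otimes}\mathrm{id}$ and $\mathrm{id}\underline{\otimes}\Phi$ are honest algebra maps between these braided tensor products. By the universal property it then suffices to compare the two composites on $C$. Expanding $(\Phi\underline{\otimes}\mathrm{id})\phi(c)$ and $(\mathrm{id}\underline{\otimes}\Phi)\phi(c)$ with $\Phi(1)=1\underline{\otimes}1$, $\Phi|_C=\phi$ and the coassociativity of $\bigtriangleup$ on $C$, both yield the common value
\[1\underline{\otimes}1\underline{\otimes}c+1\underline{\otimes}c\underline{\otimes}1+c\underline{\otimes}1\underline{\otimes}1+1\underline{\otimes}c_{(1)}\underline{\otimes}c_{(2)}+c_{(1)}\underline{\otimes}1\underline{\otimes}c_{(2)}+c_{(1)}\underline{\otimes}c_{(2)}\underline{\otimes}1+c_{(1)}\underline{\otimes}c_{(2)}\underline{\otimes}c_{(3)},\]
whence $\Phi$ is coassociative on all of $T(C)$.

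For the two braiding compatibilities I would use that $\mathscr{T}_\beta^2(C)=T(C)\underline{\otimes}T(C)$ is generated as an algebra by $C\underline{\otimes}1$ and $1\underline{\otimes}C$. Granting that both sides of each condition are algebra maps $\mathscr{T}_\beta^2(C)\to\mathscr{T}_\beta^3(C)$ for the appropriate products, it is enough to test each condition on these two families of generators. These tests are short and use only $\beta_{i0}=\beta_{0i}=\mathrm{id}$ and $\Phi(1)=1\underline{\otimes}1$: for the first condition, both sides applied to $c\underline{\otimes}1$ return $1\underline{\otimes}1\underline{\otimes}c+1\underline{\otimes}c\underline{\otimes}1+1\underline{\otimes}c_{(1)}\underline{\otimes}c_{(2)}$, and the check on $1\underline{\otimes}c$ is equally immediate.

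The crux is the granted multiplicativity: since $\beta$ is a braiding and not an algebra map, one must verify that the composites $(\mathrm{id}\underline{\otimes}\Phi)\beta$ and $\beta_1\beta_2(\Phi\underline{\otimes}\mathrm{id})$ (and their mirror images for the second condition) are nonetheless algebra homomorphisms. I would obtain this by first recording, as a lemma proved exactly like the coassociativity step, that $\Phi$ is a morphism of braided algebras, and then invoking the quantum Yang--Baxter equation for $\beta$---equivalently the well-definedness of the lifts $T^\sigma_{\chi_{ij}}$---which is what makes braiding a factor past a product agree with braiding it past the factors separately. This is exactly the step that consumes the hypotheses on $C$: testing a compatibility on $C\underline{\otimes}C$, where $\beta|_{C\underline{\otimes}C}=\sigma$, reduces it precisely to the Definition 6.1 identity $(\mathrm{id}_C\otimes\bigtriangleup)\sigma=\sigma_1\sigma_2(\bigtriangleup\otimes\mathrm{id}_C)$ for $(C,\bigtriangleup,\sigma)$, so it is the braided-coalgebra axioms of $C$ that ultimately force multiplicativity and hence the result.
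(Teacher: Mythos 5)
Your counit remark and your coassociativity argument are sound: the coassociativity step is exactly the paper's route (show $(\Phi\otimes\mathrm{id}_{T(C)})\Phi$ and $(\mathrm{id}_{T(C)}\otimes\Phi)\Phi$ are algebra maps into $\mathscr{T}_\beta^3$ and compare them on $C$, where both give the seven-term expansion you wrote), and the counit, which the paper silently omits, does work with the augmentation as you say. The genuine gap is in the braiding compatibilities, which are the heart of the proposition. Your reduction to the generators $c\underline{\otimes}1$ and $1\underline{\otimes}c$ requires both composites $\beta_1\beta_2(\Phi\otimes\mathrm{id}_{T(C)})$ and $(\mathrm{id}_{T(C)}\otimes\Phi)\beta$ to be algebra maps $\mathscr{T}_\beta^2(C)\to\mathscr{T}_\beta^3$, and you propose to get this from a lemma (``$\Phi$ is a morphism of braided algebras'') proved ``exactly like the coassociativity step''. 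That is circular: the coassociativity step itself consumes this lemma. In your own writeup it is what makes $\Phi\underline{\otimes}\mathrm{id}$ and $\mathrm{id}\underline{\otimes}\Phi$ algebra maps between the braided tensor products; in the paper, the multiplicativity computation for $(\Phi\otimes\mathrm{id}_{T(C)})\Phi$ contains an equality which is precisely the second compatibility $(\Phi\otimes \mathrm{id}_{T(C)})\beta=\beta_2\beta_1(\mathrm{id}_{T(C)}\otimes \Phi)$. So the multiplicativity you ``grant'' is, modulo the braided-algebra structure of $(T(C),m,\beta)$, equivalent to the statement being proved; note in particular that it cannot be obtained factor by factor, since $\beta$ alone is not an algebra endomorphism of $\mathscr{T}_\beta^2(C)$ unless $\sigma^2=\mathrm{id}$. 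A telltale symptom of the circularity is that your generator check uses only $\beta_{0i}=\beta_{i0}=\mathrm{id}$ and $\Phi(1)=1\underline{\otimes}1$, never $\sigma$ or the braided-coalgebra axiom of $C$: all of the actual content has been displaced into the unproven lemma.

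The paper breaks this circle by proving the two compatibilities \emph{first} and directly, by induction on the tensor degree $i$ of the first argument $x\in C^{\otimes i}$. The base case $i=1$ reduces to the corresponding identities for $\phi_1$ and $\phi_2=\bigtriangleup$ separately, and this is where Definition 6.1 for $(C,\bigtriangleup,\sigma)$ genuinely enters; the inductive step is an explicit braid computation using $\Phi(c\otimes x)=m_{\beta,2}\big(\Phi(c)\underline{\otimes}\Phi(x)\big)$ together with the Yang--Baxter relations for $\beta$ and the braided-algebra structure of $(T(C),m,\beta)$. Only after that does the paper prove coassociativity via the universal property, legitimately using the compatibility already in hand. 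If you execute your plan honestly --- inducting on left multiplication by elements $c\underline{\otimes}1$, via $(c\underline{\otimes}1)\cdot(x\underline{\otimes}y)=(c\otimes x)\underline{\otimes}y$ in $\mathscr{T}_\beta^2(C)$ --- it collapses exactly into that induction, and the braid manipulation in the inductive step is the work your proposal still owes.
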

\begin{proof}We first show that
\[\left\{
\begin{array}{lll}
\beta_1\beta_2 (\Phi\otimes \mathrm{id}_{T(C)})&=& (\mathrm{id}_{T(C)}\otimes \Phi)\beta,\\[3pt]
\beta_2\beta_1(\mathrm{id}_{T(C)}\otimes \Phi)&=&(\Phi\otimes
\mathrm{id}_{T(C)})\beta.
\end{array} \right.
\]
For any $x\in C^{\otimes i}$ and $y\in C^{\otimes j}$ we verify
the first one on $x\underline{\otimes}y$. The second one can be
verified similarly. We use induction on $i$.

When $i=1$,
\begin{eqnarray*}\beta_1\beta_2 (\Phi\otimes \mathrm{id}_{T(C)})(x\underline{\otimes}y)&=&\beta_1\beta_2 (\phi_1\otimes \mathrm{id}+\phi_2\otimes \mathrm{id})(x\underline{\otimes}y)\\[3pt]
&=&(\mathrm{id}\otimes \phi_1+\mathrm{id}\otimes \phi_2)\beta(x\underline{\otimes}y)\\[3pt]
&=& (\mathrm{id}_{T(C)}\otimes
\Phi)\beta(x\underline{\otimes}y).\end{eqnarray*} For any $c\in
C$, we have
\begin{eqnarray*}\beta_1\beta_2 (\Phi\otimes \mathrm{id}_{T(C)})\big((c\otimes x)\underline{\otimes}y\big)&=&\beta_1\beta_2\beta_3\beta_4\beta_2(\Phi\otimes \Phi\otimes\mathrm{id}_{T(C)})(c\underline{\otimes} x\underline{\otimes}y)\\[3pt]
&=&\beta_1\beta_3\beta_2\beta_3\beta_4(\Phi\otimes \Phi\otimes\mathrm{id}_{T(C)})(c\underline{\otimes} x\underline{\otimes}y)\\[3pt]
&=&\beta_1\beta_3\beta_2\big(\Phi\otimes \beta_1\beta_2(\Phi\otimes\mathrm{id}_{T(C)})\big)(c\underline{\otimes} x\underline{\otimes}y)\\[3pt]
&=&\beta_1\beta_3\beta_2(\Phi\otimes \mathrm{id}_{T(C)}\otimes\Phi)\beta_2(c\underline{\otimes} x\underline{\otimes}y)\\[3pt]
&=&\beta_3\big(\beta_1\beta_2(\Phi\otimes \mathrm{id}_{T(C)})\otimes\Phi\big)\beta_2(c\underline{\otimes} x\underline{\otimes}y)\\[3pt]
&=&\beta_3(\mathrm{id}_{T(C)}\otimes \Phi\otimes\Phi)\beta_1\beta_2(c\underline{\otimes} x\underline{\otimes}y)\\[3pt]
&=&(\mathrm{id}_{T(C)}\otimes \Phi)\beta\big((c\otimes
x)\underline{\otimes}y\big).\end{eqnarray*}

The next step is to show $(\Phi\otimes
\mathrm{id}_{T(C)})\Phi=(\mathrm{id}_{T(C)}\otimes \Phi)\Phi$.
Notice that for any $c\in C$,
\begin{eqnarray*}\lefteqn{(\Phi\otimes \mathrm{id}_{T(C)})\Phi(c)}\\[3pt]
&=&(\Phi\otimes \mathrm{id}_{T(C)})(c_{(1)}\underline{\otimes} c_{(2)}+1\underline{\otimes} c+c\underline{\otimes} 1)\\[3pt]
&=&c_{(1)}\underline{\otimes} c_{(2)}\underline{\otimes}
c_{(3)}+1\underline{\otimes} c_{(1)}\underline{\otimes} c_{(2)}+
c_{(1)}\underline{\otimes} c_{(2)}\underline{\otimes}
1\\[3pt]
&&+c_{(1)}\underline{\otimes}1 \underline{\otimes}
c_{(2)}+1\underline{\otimes} 1
\underline{\otimes}c+c_{(1)}\underline{\otimes} c_{(2)}\underline{\otimes} 1+1\underline{\otimes}c \underline{\otimes}1+c\underline{\otimes}1\underline{\otimes}1\\[3pt]
&=&(\mathrm{id}_{T(C)}\otimes \Phi)\Phi(c).\end{eqnarray*} By the
uniqueness of the universal property of $T(C)$, we only need to
show that both $(\Phi\otimes \mathrm{id}_{T(C)})\Phi$ and
$(\mathrm{id}_{T(C)}\otimes \Phi)\Phi$ are algebra morphisms from
$T(C)$ to $\mathscr{T} _\beta^3$. Since $\Phi: T(C)\rightarrow
\mathscr{T} _\beta^2(C)$ is an algebra morphism, we have that
$$\Phi\circ m=(m\otimes m)(\mathrm{id}_{T(C)}\otimes \beta
\otimes \mathrm{id}_{T(C)})(\Phi\otimes \Phi).$$ So
\begin{eqnarray*}
\lefteqn{(\Phi\otimes \mathrm{id}_{T(C)})\circ\Phi\circ
m}\\[3pt]
&=&(\Phi\otimes \mathrm{id}_{T(C)})(m\otimes
m)(\mathrm{id}_{T(C)}\otimes \beta \otimes
\mathrm{id}_{T(C)})(\Phi\otimes \Phi)\\[3pt]
&=&\big((\Phi\circ m)\otimes m\big)(\mathrm{id}_{T(C)}\otimes
\beta \otimes
\mathrm{id}_{T(C)})(\Phi\otimes \Phi)\\[3pt]
&=&(m\otimes m \otimes m)(\mathrm{id}_{T(C)}\otimes \beta \otimes
\mathrm{id}_{T(C)} \otimes \mathrm{id}_{T(C)}\otimes
\mathrm{id}_{T(C)})\\[3pt]
&&\circ(\Phi\otimes \Phi  \otimes \mathrm{id}_{T(C)} \otimes
\mathrm{id}_{T(C)})(\mathrm{id}_{T(C)}\otimes \beta \otimes
\mathrm{id}_{T(C)})(\Phi\otimes \Phi)\\[3pt]
&=&(m\otimes m\otimes m) \beta_2\\[3pt]
&&\circ\big(\mathrm{id}_{T(C)} \otimes \mathrm{id}_{T(C)}\otimes(\Phi\otimes \mathrm{id}_{T(C)}) \beta \otimes \mathrm{id}_{T(C)}\big)\big((\Phi\otimes \mathrm{id}_{T(C)})\Phi\otimes \Phi\big)\\[3pt]
&=&(m\otimes m\otimes
m)\beta_2\beta_4\beta_3\\[3pt]
&&\circ(\mathrm{id}_{T(C)}\otimes \mathrm{id}_{T(C)}\otimes
\mathrm{id}_{T(V)}\otimes \Phi\otimes
\mathrm{id}_{T(C)})\big((\Phi\otimes \mathrm{id}_{T(C)})\Phi\otimes \Phi\big)\\[3pt]
&=&m_{\beta,3}\big((\Phi\otimes \mathrm{id}_{T(C)})\Phi\otimes(
\Phi_2\otimes \mathrm{id}_{T(C)})\Phi\big).
\end{eqnarray*}
It follows that $(\Phi\otimes \mathrm{id}_{T(C)})\Phi$ is an
algebra morphism. Similarly, the map $(\mathrm{id}_{T(C)}\otimes
\Phi)\Phi$ is also an algebra morphism.
\end{proof}

Now we begin to study the relation between the braided coalgebra
$(T(C), \Phi, \beta)$ and the quantum quasi-shuffle algebra. We
show that they are dual to each other in the following sense.

Let $<,>: V\times W\rightarrow \mathbb{K}$ and
$<,>^\prime:V^\prime\times W^\prime\rightarrow \mathbb{K}$ be two
bilinear non-degenerate forms. For any $f\in \mathrm{Hom}(V,
V^\prime)$, the adjoint operator $\mathrm{adj}(f)\in
\mathrm{Hom}(W^\prime, W)$ of $f$ is defined to be the one such
that $<x,\mathrm{adj}(f)(y)>=<f(x),y>^\prime$ for any $x\in V$ and
$y\in W^\prime$. It is clear that $\mathrm{adj}(f\circ
g)=\mathrm{adj}(g)\circ\mathrm{adj}(f)$.

\begin{remark}If there is a non-degenerate bilinear form between two vector spaces $A$ and $C$, and $(A,m,\sigma)$ is a braided algebra, then
$(C,\mathrm{adj}(m),\mathrm{adj}(\sigma))$ is a braided coalgebra.
The converse is also true.\end{remark}

From now on, we assume that there always exists a non-degenerate
bilinear form $<,>$ between two vector spaces $A$ and $C$. It can
be extended to a bilinear form $<,>: A^{\otimes n}\times
C^{\otimes n}\rightarrow \mathbb{K}$ for any $n \geq 1$ in the
usual way: for any $a_1,\ldots, a_n\in A$ and $c_1,\ldots,c_n\in
C$,
$$<a_1\otimes \cdots \otimes a_n, c_1\otimes \cdots \otimes
c_n>=\prod_{i=1}^n <a_i, c_i>.$$ It induces a non-degenerate
bilinear form  $<,>: T(A)\times T(C)\rightarrow \mathbb{K}$ by
setting that $<x,y>=0$ for any $x\in A^{\otimes i}$ , $y\in
C^{\otimes j}$ and $i\neq j$. Then we can define a non-degenerate
bilinear form $<,>: T(A)\underline{\otimes}T(A)\times
T(C)\underline{\otimes}T(C)\rightarrow \mathbb{K}$ by requiring
that $<u\underline{\otimes} v,x\underline{\otimes}y>=<u,x><v,y>$
for any $u,v\in T(A)$ and $x,y\in T(C)$.

If $(C,\bigtriangleup,\sigma)$ is a braided coalgebra, we denote
$\tau=\mathrm{adj}(\sigma)$ and $\alpha=\mathrm{adj}(\beta)$. Then
$\alpha$ is a braiding on $T(A)$ and
$\alpha_{i,j}=T^\tau_{\chi_{ji}^{-1}}=T^\tau_{\chi_{ij}}$.

\begin{theorem}Under the assumptions above, we have that $\mathrm{adj}(\Phi)=\Join_\sigma$.\end{theorem}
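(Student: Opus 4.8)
The plan is to prove the equivalent pairing identity
$\langle u \Join_\sigma v,\, y\rangle = \langle u\underline{\otimes}v,\, \Phi(y)\rangle$
for all homogeneous $u\in A^{\otimes i}$, $v\in A^{\otimes j}$ and $y\in C^{\otimes n}$ (writing $u=a_1\otimes u'$ and $v=b_1\otimes v'$). Since the induced form $\langle,\rangle\colon T(A)\underline{\otimes}T(A)\times T(C)\underline{\otimes}T(C)\to\mathbb{K}$ is non-degenerate and graded, this identity is exactly $\mathrm{adj}(\Phi)(u\underline{\otimes}v)=u\Join_\sigma v$. Here $\Join_\sigma$ is the quantum quasi-shuffle product of the braided algebra $(A,\mathrm{adj}(\bigtriangleup),\tau)$ dual to $(C,\bigtriangleup,\sigma)$ in the sense of Remark 6.5, so that the operators written $\beta_{i,1}$ in its inductive definition in Section 3 are the adjoint braidings $\alpha_{i,1}=T^\tau_{\chi_{i,1}}$ on $T(A)$. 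I would argue by induction on $n=|y|$.

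For the inductive step, write $y=c\otimes y'$ with $c\in C$ and $y'\in C^{\otimes n-1}$. Since $\Phi$ is an algebra morphism extending $\phi=\phi_1+\phi_2$, one has $\Phi(y)=m_{\beta,2}\big(\phi(c)\otimes\Phi(y')\big)$, where $\phi(c)=c\underline{\otimes}1+1\underline{\otimes}c+\sum c_{(1)}\underline{\otimes}c_{(2)}$ and $m_{\beta,2}=(m\otimes m)(\mathrm{id}\otimes\beta\otimes\mathrm{id})$ is the product of Proposition 6.2. Expanding $\phi(c)$ into its three summands and pairing $m_{\beta,2}(\,\cdot\,\otimes\Phi(y'))$ against $u\underline{\otimes}v$ produces three contributions. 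The key observation is that the inductive formula of Section 3 organizes $u\Join_\sigma v$ precisely according to the first letter of the output, which is $a_1$, a braided copy of $b_1$, or $m(a_1\otimes b_1)$, and that these three alternatives are dual to the three summands of $\phi(c)$: the term $c\underline{\otimes}1$ places $c$ at the head of the left tensor factor and pairs it with $a_1$, matching $a_1\otimes(u'\Join_\sigma v)$; the term $1\underline{\otimes}c$ braids $c$ through $\Phi(y')$ and, via $\mathrm{adj}(\beta)=\alpha$, pairs it with the braided $b_1$, matching the $\alpha_{i,1}$-term; and the term $\sum c_{(1)}\underline{\otimes}c_{(2)}$, together with $\mathrm{adj}(\bigtriangleup)=m$, reassembles into $m(a_1\otimes b_1)$, matching the $m$-term. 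In each case the remaining factors are controlled by the induction hypothesis applied to $\Phi(y')$, and the degree constraints of the graded pairing force the internal braiding $\beta_{1,|p|}$ occurring in $m_{\beta,2}$ to act with the correct width ($|p|=i$ or $i-1$).

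The base cases $n\le 1$ are immediate from $\lambda\Join_\sigma x=\lambda x$ and the definitions of $\phi_1,\phi_2$; in particular the degree-one case uses $\langle m(a\otimes b),c\rangle=\langle a\otimes b,\bigtriangleup c\rangle=\langle a\underline{\otimes}b,\phi_2(c)\rangle$. A pleasant feature is that the boundary situations where $u$ or $v$ is a scalar, as well as the counit contributions hidden in $\sum c_{(1)}\underline{\otimes}c_{(2)}$, are killed automatically by the grading of the pairing, so no separate bookkeeping for the special cases $i=1$ or $j=1$ of the Section 3 formula is needed. I expect the main obstacle to be exactly this bookkeeping inside $m_{\beta,2}$: one must check carefully that taking adjoints converts the braiding $\beta_{1,|p|}$ (built from $\sigma$ on $C$) into the operator $\alpha_{i,1}=T^\tau_{\chi_{i,1}}$ (built from $\tau=\mathrm{adj}(\sigma)$ on $A$) with the expected index shift, and that the Sweedler reassembly for the $m$-term lands on the correct composition; both rest on $\mathrm{adj}(f\circ g)=\mathrm{adj}(g)\circ\mathrm{adj}(f)$ and on the well-definedness of the braid lifts $T^\sigma,T^\tau$ on reduced expressions.
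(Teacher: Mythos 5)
Your proposal is correct and is essentially the paper's own proof: the paper likewise expands $\Phi(c_1\otimes\cdots\otimes c_n)=m_{\beta,2}\big(\Phi(c_1)\underline{\otimes}\Phi(c_2\otimes \cdots\otimes c_n)\big)$, splits $\Phi(c_1)$ into the three summands $c_1\underline{\otimes}1$, $1\underline{\otimes}c_1$, $c_{1(1)}\underline{\otimes}c_{1(2)}$, and then takes adjoints on each graded piece to show that $\mathrm{adj}(\Phi)_{(i,j)}$ obeys exactly the Section 3 recursion for the quantum quasi-shuffle product built on $(A,\mathrm{adj}(\bigtriangleup),\tau)$. Your reformulation via the graded pairing with explicit induction on $|y|$, including the matching of the three summands of $\phi(c)$ to the three terms of the inductive formula and the width bookkeeping for $\beta_{1,?}$, is the same argument in dual language.
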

\begin{proof}For any $c_1,\ldots,c_n\in
C$, we notice that
\begin{eqnarray*}\Phi(c_1\otimes \cdots\otimes c_n)&=&m_{\beta,2}\big(\Phi(c_1)\underline{\otimes}\Phi(c_2\otimes \cdots\otimes c_n\big)\\[3pt]
&=&m_{\beta,2}\big((1\underline{\otimes}c_1)\underline{\otimes}\Phi(c_2\otimes \cdots\otimes c_n)\big)\\[3pt]
&&+m_{\beta,2}\big((v_1\underline{\otimes}1)\underline{\otimes}\Phi(c_2\otimes \cdots\otimes c_n)\big)\\[3pt]
&&+m_{\beta,2}\big((c_{1(1)}\underline{\otimes}c_{1
(2)})\underline{\otimes}\Phi(c_2\otimes \cdots\otimes
c_n)\big)\\[3pt]
&=&(\beta_{1,?}\otimes \mathrm{id}_C^{\otimes n-1-?})(\mathrm{id}_C\otimes \Phi)(c_1\otimes \cdots\otimes c_n)\\[3pt]
&&+(\mathrm{id}_C\otimes \Phi)(c_1\otimes \cdots\otimes c_n)\\[3pt]
&&+(\mathrm{id}_C\otimes\beta_{1,?}\otimes \mathrm{id}_C^{\otimes
n-1-?})(\bigtriangleup\otimes \Phi)(c_1\otimes \cdots\otimes
c_n).\end{eqnarray*} Here, since $\Phi(C^{\otimes k})\subset
C^{\otimes 0}\underline{\otimes}C^{\otimes k}+C^{\otimes
k}\underline{\otimes}C^{\otimes 0}+\cdots+C^{\otimes
k}\underline{\otimes}C^{\otimes k}$, we denote by $\beta_{1,?}$
the action of $\beta$ on $C\underline{\otimes}C^{\otimes ?}$ where
$C^{\otimes ?}$ is the left factor of some component in
$\Phi(C^{\otimes k})$.

We denote by $\mathrm{adj}(\Phi)_{(k,l)}$ the action of
$\mathrm{adj}(\Phi)$ on $A^{\otimes
k}\underline{\otimes}A^{\otimes l}$. Then on $A^{\otimes
i}\underline{\otimes}A^{\otimes j}$, we have
\begin{eqnarray*}\mathrm{adj}(\Phi)_{(i,j)}&=&\mathrm{adj}\big((\beta_{1,?}\otimes
\mathrm{id}_C^{\otimes i+j-1-?})(\mathrm{id}_C\otimes
\Phi)+(\mathrm{id}_C\otimes
\Phi)\\[3pt]
&&\ \ \ \ \ \ +(\mathrm{id}_C\otimes\beta_{1,?}\otimes
\mathrm{id}_C^{\otimes i+j-1-?})(\bigtriangleup\otimes \Phi)\big)_{(i,j)}\\[3pt]
&=&\big(\mathrm{id}_A\otimes \mathrm{adj}(\Phi)_{(i,j-1)}\big)(\alpha_{i,1}\otimes \mathrm{id}_A^{\otimes j-1})\\[3pt]
&&+\big(\mathrm{id}_A\otimes \mathrm{adj}(\Phi)_{(i-1,j)}\big)\\[3pt]
&&+\big(\mathrm{adj}(\bigtriangleup)\otimes
\mathrm{adj}(\Phi)_{(i-1,j-1)}\big)(\mathrm{id}_A\otimes\beta_{i-1,1}\otimes
\mathrm{id}_A^{\otimes j-1}).\end{eqnarray*}This shows that the
map $\mathrm{adj}(\Phi)$ shares the same inductive formula with
the quantum quasi-shuffle product built on $(A,
\mathrm{adj}(\bigtriangleup),\tau)$. Hence we have the conclusion.
\end{proof}

\section{Representations from Rota-Baxter algebras}

In order to provide representations of quantum quasi-shuffle
algebras, our main point is to construct algebra homomorphisms
from quantum quasi-shuffles to some algebras. Inspired by the
works on the connection between mixable shuffle algebras and
Rota-Baxter algebras (\cite{GK} and \cite{G}), we choose the
target algebras to be relevant object of Rota-Baxter algebras in
braided categories with some suitable conditions. We first recall
the definition of Rota-Baxter algebras.
\begin{definition}Let $\lambda$ be an element in $\mathbb{K}$. A pair $(R,P)$ is
called a \emph{Rota-Baxter algebra of weight $\lambda$} if $R$ is
an algebra and $P$ is a linear endomorphism of $R$ satisfying that
for any $x,y\in R$,
$$P(x)P(y)=P(xP(y))+P(P(x)y)+\lambda P(xy).$$ Such a map $P$ is called a \emph{Rota-Baxter operator of weight $\lambda$}.\end{definition}

We extend this notion in braided categories.

\begin{definition}A triple $(R,P,\sigma)$ is called a \emph{right (resp. left) weak braided Rota-Baxter algebra of weight $\lambda$} if $(R,\sigma)$ is a braided algebra and $P$ is a Rota-Baxter operator on $R$ of weight $\lambda$ such that $\sigma(P\otimes \mathrm{id}_R)=(\mathrm{id}_R\otimes P)\sigma$ (resp. $\sigma(\mathrm{id}_R\otimes P)=(P\otimes \mathrm{id}_R)\sigma$). If the multiplication $m$ of $R$ satisfies $m\circ \sigma=m$, $R$ is said to be \emph{commutative}.\end{definition}

Obviously, any usual Rota-Baxter algebra is right weak braided
with respect to the usual flip map. Note that a right weak braided
Rota-Baxter algebra which is also left weak is a braided
Rota-Baxter algebra (cf. \cite{J}). Examples of right or left weak
braided Rota-Baxter algebras can be found in \cite{J}.

\begin{theorem}Suppose that $(A, m,\sigma)$ is a braided algebra, $(R,P,\sigma')$ is a commutative right weak braided Rota-Baxter algebra of weight 1, $f: A\rightarrow R$ is an algebra map such that $(f\otimes f)\circ \sigma=\sigma'\circ(f\otimes f)$. We define
a linear map $\overline{f}:T(A)\rightarrow R$ recursively by
$\overline{f}(1)=1_R$, $\overline{f}(a_1)=P(f(a_1))$ and
$\overline{f}(a_1\otimes \cdots\otimes
a_n)=P\big(f(a_1)\overline{f}(a_2\otimes \cdots\otimes a_n)\big)$
for $n\geq 2$. Then $\overline{f}$ is an algebra map from the
quantum quasi-shuffle algebra $T_{\sigma}^{qsh}(A)$ to
$R$.\end{theorem}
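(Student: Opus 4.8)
The plan is to show $\overline{f}(x\Join_\sigma y)=\overline{f}(x)\,\overline{f}(y)$ for all $x\in A^{\otimes i}$ and $y\in A^{\otimes j}$ by induction on $i+j$, matching term by term the three-term recursion defining $\Join_\sigma$ against the three-term Rota-Baxter identity of weight $1$. Write $x=a_1\otimes x'$, $y=b_1\otimes y'$, and set $u=f(a_1)\overline{f}(x')$, $v=f(b_1)\overline{f}(y')$, so that $\overline{f}(x)=P(u)$ and $\overline{f}(y)=P(v)$. Then $\overline{f}(x)\overline{f}(y)=P(u)P(v)=P(uP(v))+P(P(u)v)+P(uv)$. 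The first summand equals $P\big(f(a_1)\overline{f}(x')\overline{f}(y)\big)$; by the induction hypothesis for $(i-1,j)$ this is $P\big(f(a_1)\overline{f}(x'\Join_\sigma y)\big)=\overline{f}\big(a_1\otimes(x'\Join_\sigma y)\big)$, which is exactly $\overline{f}$ of the first term of the $\Join_\sigma$ recursion. (The scalar cases $i=0$ or $j=0$ are immediate from $\overline{f}(1)=1_R$, and provide the base of the induction.)

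The heart of the argument is the auxiliary identity, for $x\in A^{\otimes i}$ and $b\in A$,
\begin{equation*}
\sigma'\big(\overline{f}(x)\otimes f(b)\big)=(f\otimes\overline{f})\,\beta_{i,1}(x\otimes b),\tag{$\ast$}
\end{equation*}
which I prove by a separate induction on $i$. For $i=1$ the right weak braided condition $\sigma'(P\otimes\mathrm{id}_R)=(\mathrm{id}_R\otimes P)\sigma'$ and the compatibility $\sigma'(f\otimes f)=(f\otimes f)\sigma$ give $\sigma'\big(P(f(a_1))\otimes f(b)\big)=(\mathrm{id}_R\otimes P)(f\otimes f)\sigma(a_1\otimes b)=(f\otimes\overline{f})\sigma(a_1\otimes b)$, and $\beta_{1,1}=\sigma$. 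For the inductive step, write $x=a_1\otimes x''$, pull $P$ outward by the right weak braided condition, expand the internal product $f(a_1)\overline{f}(x'')=m_R\big(f(a_1)\otimes\overline{f}(x'')\big)$ through the braided-algebra relation $\sigma'(m_R\otimes\mathrm{id}_R)=(\mathrm{id}_R\otimes m_R)\sigma'_1\sigma'_2$ for $R$, then apply the induction hypothesis to $\sigma'\big(\overline{f}(x'')\otimes f(b)\big)$ and the compatibility of $f$ once more. The braid operators so accumulated reassemble into $\beta_{i,1}$ via the factorization $\beta_{i,1}=\sigma_1(\mathrm{id}_A\otimes\beta_{i-1,1})$, which holds because $s_1s_2\cdots s_i$ is a reduced expression for $\chi_{i,1}$. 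Applying $m_R$ to $(\ast)$ and using commutativity $m_R\circ\sigma'=m_R$ yields the product form $\overline{f}(x)\,f(b)=m_R(f\otimes\overline{f})\beta_{i,1}(x\otimes b)$, which is what the main induction invokes.

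With $(\ast)$ established the two remaining Rota-Baxter summands match the two braided $\Join_\sigma$ terms. Writing $\beta_{i,1}(x\otimes b_1)=\sum b_1^\flat\otimes x^\flat$, the product form of $(\ast)$ turns $P(P(u)v)=P\big(\overline{f}(x)\,f(b_1)\,\overline{f}(y')\big)$ into $P\big(\sum f(b_1^\flat)\,\overline{f}(x^\flat)\,\overline{f}(y')\big)$; the induction hypothesis for $(i,j-1)$ replaces $\overline{f}(x^\flat)\overline{f}(y')$ by $\overline{f}(x^\flat\Join_\sigma y')$, so this summand becomes $\overline{f}$ of $(\mathrm{id}_A\otimes\Join_{\sigma(i,j-1)})(\beta_{i,1}\otimes\mathrm{id}_A^{\otimes j-1})(x\otimes b_1\otimes y')$, the second $\Join_\sigma$ term. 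Likewise, writing $\beta_{i-1,1}(x'\otimes b_1)=\sum b_1^\sharp\otimes x'^\sharp$, the product form of $(\ast)$ applied to $\overline{f}(x')f(b_1)$ rewrites $P(uv)=P\big(f(a_1)\overline{f}(x')\,f(b_1)\,\overline{f}(y')\big)$ as $P\big(\sum f(a_1)f(b_1^\sharp)\,\overline{f}(x'^\sharp)\,\overline{f}(y')\big)$; since $f$ is an algebra map, $f(a_1)f(b_1^\sharp)=f\big(m(a_1\otimes b_1^\sharp)\big)$, and the induction hypothesis for $(i-1,j-1)$ gives $\overline{f}(x'^\sharp)\overline{f}(y')=\overline{f}(x'^\sharp\Join_\sigma y')$, so this summand becomes $\overline{f}$ of $(m\otimes\Join_{\sigma(i-1,j-1)})(\mathrm{id}_A\otimes\beta_{i-1,1}\otimes\mathrm{id}_A^{\otimes j-1})(x\otimes b_1\otimes y')$, the third $\Join_\sigma$ term. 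Summing the three matched contributions completes the induction.

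I expect the main obstacle to be the bookkeeping in the inductive step of $(\ast)$: threading the braiding of the internal product $f(a_1)\overline{f}(x'')$ through $\sigma'_1\sigma'_2$ and checking that the accumulated braid operators reassemble precisely into $\beta_{i,1}$ through the reduced word for $\chi_{i,1}$. This is the one place where the three compatibility hypotheses — right weak braidedness of $P$, the $f$-intertwining of $\sigma$ and $\sigma'$, and the braided-algebra relation of $R$ — must be combined in exactly the right order; it is also where commutativity $m_R\circ\sigma'=m_R$ and the weight being $1$ enter, matching respectively the braided reordering and the unit coefficient of the merge term $m(a_1\otimes b_1)$ in the $\Join_\sigma$ recursion.
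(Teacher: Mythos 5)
Your proposal is correct and takes essentially the same route as the paper: the same induction on $i+j$, with your identity $(\ast)$ being exactly the intertwining relation $(f\otimes\overline{f})\beta_{k,1}=\sigma'(\overline{f}\otimes f)$ that the paper invokes ``by an easy induction,'' and the same term-by-term matching of the weight-$1$ Rota--Baxter expansion of $P(u)P(v)$ against the three-term recursion for $\Join_\sigma$, using commutativity $m'\sigma'=m'$, the right weak braided condition, and the algebra-map property of $f$ in the same places. The only difference is organizational: you work backward from $P(u)P(v)$ and handle all bidegrees uniformly (with empty-word conventions), whereas the paper expands $\overline{f}\big(x\Join_\sigma y\big)$ forward in three separate cases ($j=1$, $i=1$, and general).
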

\begin{proof}We denote by $m'$ the multiplication of $R$. For any $i,j\geq 0$, we will verify that $\overline{f}\Join_\sigma =m' (\overline{f}\otimes \overline{f})$ on $A^{\otimes i}\underline{\otimes}A^{\otimes j}$. We use induction on
$n=i+j$.

When $i=0$ or $j=0$, it is trivial. So we assume that $i\neq 0$
and $j\neq 0$ in the sequel.

For $n=2$, i.e., $i=j=1$, assume $a,b\in A$, then\begin{eqnarray*}
\lefteqn{\overline{f}(a\Join_\sigma b)}\\[3pt]
&=&\overline{f}\big(a\otimes b+\sigma(a\otimes b)+m(a\otimes b)\big)\\[3pt]
&=&P\big(f(a)\overline{f}(b)\big)+P m' (f\otimes Pf) \sigma(a\otimes b)+P\big(f(ab)\big)\\[3pt]
&=&P\Big(f(a)P\big(f(b)\big)\Big)+P m' (\mathrm{id}_R\otimes P) \sigma'(f\otimes f)(a\otimes b)+P\big(f(a)f(b)\big)\\[3pt]
&=&P\Big(f(a)P\big(f(b)\big)\Big)+Pm' \sigma'(P\otimes \mathrm{id}_R)\big(f(a)\otimes f( b)\big)+P\big(f(a)f(b)\big)\\[3pt]
&=&P\Big(f(a)P\big(f(b)\big)\Big)+P m'(P\otimes \mathrm{id}_R)\big(f(a)\otimes f( b)\big)+P\big(f(a)f(b)\big)\\[3pt]
&=&P\Big(f(a)P\big(f(b)\big)\Big)+P\Big(P\big( f(a)\big)f(b)\Big)+P\big(f(a)f(b)\big)\\[3pt]
&=&P\big(f(a)\big)P\big(f(b)\big)\\[3pt]
&=&\overline{f}(a)\overline{f}(b).
\end{eqnarray*}

Assume that $n>2$ and the result holds for $i+j<n$. Note that by
an easy induction we have $(f\otimes
\overline{f})\beta_{k,1}=\sigma'(\overline{f}\otimes f)$ for any
$k\in \mathbb{N}$.

For $i=n-2$ and $x\in A^{\otimes i}$,\begin{eqnarray*}
\lefteqn{\overline{f}\big((a\otimes x)\Join_\sigma b\big)}\\[3pt]
&=&\overline{f}\big(a\otimes (x\Join_\sigma
b)+\beta_{i+1,1}(a_1\otimes x\otimes
b)\\[3pt]
&&+(m\otimes\mathrm{id}_A^{\otimes  i})(\mathrm{id}_A\otimes
\beta_{i,1})(a\otimes x\otimes b)\big)\\[3pt]
&=&P\big(f(a)\overline{f}(x\Join_\sigma b)\big)\\[3pt]
&&+P m' (\mathrm{id}_R\otimes Pm')(f\otimes f\otimes
\overline{f})(\sigma\otimes \mathrm{id}_A^{\otimes
i})(\mathrm{id}_A\otimes \beta_{i,1})(a_1\otimes x\otimes
b) \\[3pt]
&&+Pm'(fm\otimes\overline{f})(\mathrm{id}_A\otimes
\beta_{i,1})(a\otimes x\otimes b)\\[3pt]
&=&P\big(f(a)\overline{f}(x)\overline{f}( b)\big)\\[3pt]
&&+P m' (\mathrm{id}_R\otimes Pm')(\sigma'\otimes
\mathrm{id}_R)\big(f\otimes (f\otimes
\overline{f})\beta_{i,1}\big)(a_1\otimes x\otimes
b) \\[3pt]
&&+Pm'\big(m'(f\otimes
f)\otimes\overline{f}\big)(\mathrm{id}_A\otimes
\beta_{i,1})(a\otimes x\otimes b)\\[3pt]
&=&P\Big(f(a)\overline{f}(x)P\big(f(b)\big)\Big)\\[3pt]
&&+P m' (\mathrm{id}_R\otimes Pm')(\sigma'\otimes
\mathrm{id}_R)\big(f\otimes \sigma'(\overline{f}\otimes
f)\big)(a_1\otimes x\otimes
b) \\[3pt]
&&+Pm'(m'\otimes\mathrm{id}_R)\big(f\otimes
(f\otimes\overline{f})\beta_{i,1}\big)(a\otimes x\otimes b)\\[3pt]
&=&P\Big(f(a)\overline{f}(x)P\big(f(b)\big)\Big)+P m'
(\mathrm{id}_R\otimes P)(\mathrm{id}_R\otimes
m')\sigma'_1\sigma'_2 \big(f(a_1)\otimes \overline{f}(x)\otimes f(b)\big)\\[3pt]
&&+Pm'(\mathrm{id}_R\otimes m')\big(f\otimes
\sigma'(\overline{f}\otimes f)\big)(a\otimes x\otimes b)\\[3pt]
&=&P\Big(f(a)\overline{f}(x)P\big(f(b)\big)\Big)+P m'
(\mathrm{id}_R\otimes P)\sigma'(m'\otimes\mathrm{id}_R
)\big(f(a_1)\otimes \overline{f}(x)\otimes f(b)\big)\\[3pt]
&&+Pm'(\mathrm{id}_R\otimes m'\sigma')(f\otimes
\overline{f}\otimes f)(a\otimes x\otimes b)\\[3pt]
&=&P\Big(f(a)\overline{f}(x)P\big(f(b)\big)\Big)+P(P\big(f(a)\overline{f}(x))f(b)\big)+P\big(f(a)\overline{f}(y)f(b)\big)\\[3pt]
&=&P\big(f(a)\overline{f}(x)\big)P\big(f(b)\big)\\[3pt]
&=&\overline{f}(a\otimes x)\overline{f}(b).
\end{eqnarray*}

For $j=n-2$ and $y\in A^{\otimes j}$, \begin{eqnarray*}
\lefteqn{\overline{f}\big(a\Join_\sigma (b\otimes y)\big)}\\[3pt]
&=&\overline{f}\big(a\otimes b\otimes y+(\mathrm{id}_A\otimes\Join_{\sigma  (1,j)})(\beta_{1,1}\otimes\mathrm{id}_A^{\otimes  j})(a\otimes b\otimes y)+(ab)\otimes y\big)\\[3pt]
&=&P\Big(f(a)P\big(f(b)\overline{f}(y)\big)\Big)+P m' (f\otimes \overline{f} \Join_{\sigma  (1,j)})(\sigma\otimes\mathrm{id}_A^{\otimes  j})(a\otimes b\otimes y) \\[3pt]
&&+P\big(f(ab)\overline{f}(y)\big)\\[3pt]
&=&P\Big(f(a)P\big(f(b)\overline{f}(y)\big)\Big)+P m' \big(f\otimes m'(\overline{f}\otimes \overline{f}) \big)(\sigma\otimes\mathrm{id}_A^{\otimes  j})(a\otimes b\otimes y) \\[3pt]
&&P\big(f(a)f(b)\overline{f}(y)\big)\\[3pt]
&=&P\Big(f(a)P\big(f(b)\overline{f}(y)\big)\Big)+P\big(f(a)f(b)\overline{f}(y)\big)\\[3pt]
&&+P m'(\mathrm{id}_R\otimes m') (f\otimes Pf\otimes \overline{f}) (\sigma\otimes\mathrm{id}_A^{\otimes  j})(a\otimes b\otimes y)\\[3pt]
&=&P\Big(f(a)P\big(f(b)\overline{f}(y)\big)\Big)+P\big(f(a)f(b)\overline{f}(y)\big)\\[3pt]
&&+P m'(m'\otimes \mathrm{id}_R) \big((f\otimes Pf)\sigma\otimes \overline{f}\big) (a\otimes b\otimes y)\\[3pt]
&=&P\Big(f(a)P\big(f(b)\overline{f}(y)\big)\Big)+P\big(f(a)f(b)\overline{f}(y)\big)\\[3pt]
&&+P m'(m'\sigma' \otimes \mathrm{id}_R)(P\otimes\mathrm{id}_R^{\otimes 2})(f\otimes f\otimes \overline{f})(a\otimes b\otimes y)\\[3pt]
&=&P\Big(f(a)P\big(f(b)\overline{f}(y)\big)\Big)+P\Big(P\big(f(a)\big)f(b)\overline{f}(y)\Big)+P\big(f(a)f(b)\overline{f}(y)\big)\\[3pt]
&=&P\big(f(a)\big)P\big(f(b)\overline{f}(y)\big)\\[3pt]
&=&\overline{f}(a)\overline{f}(b\otimes y).
\end{eqnarray*}

Finally, for $i,j\geq 1$ with $i+j=n-2$ and $x\in A^{\otimes i},
y\in A^{\otimes j}$,\begin{eqnarray*}
\lefteqn{\overline{f}\big((a\otimes x)\Join_\sigma (b\otimes y)\big)}\\[3pt]
&=&\overline{f}\big(a\otimes (x\Join_{\sigma}(b\otimes
y))+(\mathrm{id}_A\otimes  \Join_{\sigma
(i+1,j)})(\beta_{i+1,1}\otimes \mathrm{id}_A^{\otimes j})(a\otimes
x\otimes  b\otimes y)\\[3pt]
&&+(m\otimes \Join_{\sigma (i,j)} )(\mathrm{id}_A\otimes
\beta_{i,1}\otimes \mathrm{id}_A^{\otimes j})(a\otimes
x\otimes b\otimes y)\\[3pt]
&=&P\Big(f(a)\overline{f}\big (x\Join_{\sigma}(b\otimes
y)\big)\Big)\\[3pt]
&&+Pm'(f\otimes \overline{f}\Join_{\sigma
(i+1,j)})(\beta_{i+1,1}\otimes \mathrm{id}_A^{\otimes j})(a\otimes
x\otimes  b\otimes y)\\[3pt]
&&+Pm'(fm\otimes \overline{f}\Join_{\sigma (i,j)}
)(\mathrm{id}_A\otimes \beta_{i,1}\otimes \mathrm{id}_A^{\otimes
j})(a\otimes
x\otimes b\otimes y)\\[3pt]
&=&P\big(f(a)\overline{f} (x)\overline{f}(b\otimes
y)\big)\\[3pt]
&&+Pm'\big(f\otimes m'(\overline{f}\otimes
\overline{f})\big)(\beta_{i+1,1}\otimes \mathrm{id}_A^{\otimes
j})(a\otimes
x\otimes  b\otimes y)\\[3pt]
&&+Pm'\big(m'(f\otimes f)\otimes m'(\overline{f}\otimes
\overline{f}) \big)(\mathrm{id}_A\otimes \beta_{i,1}\otimes
\mathrm{id}_A^{\otimes j})(a\otimes
x\otimes b\otimes y)\\[3pt]
&=&P\Big(f(a)\overline{f} (x)P\big(f(b)\overline{f}(
y)\big)\Big)\\[3pt]
&&+Pm'(\mathrm{id}_R\otimes m')\big((f\otimes
\overline{f})\beta_{i+1,1}\otimes \overline{f}\big)(a\otimes
x\otimes  b\otimes y)\\[3pt]
&&+Pm'(m'\otimes m')\big(f\otimes (f\otimes
\overline{f})\beta_{i,1}\otimes \overline{f}\big) (a\otimes
x\otimes b\otimes y)\\[3pt]
&=&P\Big(f(a)\overline{f} (x)P\big(f(b)\overline{f}(
y)\big)\Big)\\[3pt]
&&+Pm'(m'\otimes \mathrm{id}_R)\big(\sigma'(\overline{f}\otimes f
)\otimes \overline{f}\big)(a\otimes
x\otimes  b\otimes y)\\[3pt]
&&+Pm'(m'\otimes m')\big(f\otimes \sigma'(\overline{f}\otimes f
)\otimes \overline{f}\big) (a\otimes
x\otimes b\otimes y)\\[3pt]
&=&P\Big(f(a)\overline{f} (x)P\big(f(b)\overline{f}(
y)\big)\Big)+P\big(\overline{f}(a\otimes
x)f(b) \overline{f}(y)\big)\\[3pt]
&&+P\big(f(a)
\overline{f}(x) f(b) \overline{f}(y)\big)\\[3pt]
&=&P\Big(f(a)\overline{f} (x)P\big(f(b)\overline{f}(
y)\big)\Big)+P\Big(P\big(f(a)
\overline{f}(x)\big)f(b) \overline{f}(y)\Big)\\[3pt]
&&+P\big(f(a)
\overline{f}(x) f(b) \overline{f}(y)\big)\\[3pt]
&=&P\big(f(a)\overline{f}(x)\big)P\big(f(b)\overline{f}(y)\big)\\[3pt]
&=&\overline{f}(a\otimes x)\overline{f}(b\otimes y),
\end{eqnarray*}
where the sixth equality follows from the fact that $m'(m'\otimes
m')(\mathrm{id}_R\otimes \sigma'\otimes\mathrm{id}_R
)=m'(m'\otimes m')$.
\end{proof}

\begin{remark}1. The quantum quasi-shuffle algebra $T_{\sigma}^{qsh}(A)$ built on a braided algebra $(A,m,\sigma)$ has another inductive formula (cf. \cite{JRZ}): for $i,j>1$ and any $a_1,\ldots,
a_i,b_1,\ldots, b_j\in A$, we have
\begin{eqnarray*}
\lefteqn{(a_1\otimes\cdots\otimes a_i)\Join_\sigma (b_1\otimes\cdots\otimes
b_j)}\\[3pt]
&=&\Big((a_1\otimes\cdots\otimes a_i)\Join_\sigma (b_1\otimes\cdots\otimes b_{j-1})\Big)\otimes b_j\\[3pt]
&&+(\Join_{\sigma (i-1,j)}\otimes
\mathrm{id}_A)(\mathrm{id}_A^{\otimes
i-1}\otimes\beta_{1j})(a_1\otimes\cdots\otimes a_i\otimes b_1\otimes\cdots\otimes b_j)\\[3pt]
&&+(\Join_{\sigma (i-1,j-1)}\otimes m)(\mathrm{id}_A^{\otimes
i-1}\otimes\beta_{1,j-1}\otimes\mathrm{id}_A)(a_1\otimes\cdots\otimes
a_i\otimes b_1\otimes\cdots\otimes b_j).
\end{eqnarray*}If $(R,P,m',\sigma')$ is a commutative left weak braided Rota-Baxter algebra of weight 1, $f: A\rightarrow R$ is an algebra map such that $(f\otimes f) \sigma=\sigma'(f\otimes f)$, then the linear map $\overline{f}:T(V)\rightarrow R$ defined recursively by $\widetilde{f}=Pm'(\widetilde{f}\otimes f)$ is also an algebra map from the
quantum quasi-shuffle algebra $T_{\sigma}^{qsh}(A)$ to $R$. The
proof is similar to the one of the above theorem.

2. By composing the left regular representation of $R$, Theorem
7.3 provides a representation of $T_{\sigma}^{qsh}(A)$ on $R$.
Since $\mathrm{Im}\overline{f}\subset P(R)$ and $P(R)$ is a
subalgebra of $R$, $P(R)$ is a $T_{\sigma}^{qsh}(A)$-submodule of
$R$. Similarly, $\mathrm{Im}\overline{f}$ is
$T_{\sigma}^{qsh}(A)$-submodule of $P(R)$.

3. Restricting the map $\overline{f}$ to the subalgebra
$S^{qsh}_\sigma(A)$ of $T_{\sigma}^{qsh}(A)$, we obtain an algebra
map from $S^{qsh}_\sigma(A)$ to $R$, and hence a representation of
$S^{qsh}_\sigma(A)$ on $R$.\end{remark}

\begin{example}Let $(A,m,\sigma)$ be a braided algebra such that $m\sigma=m$. We define $P:A\rightarrow A$ by $P(a)=-a$. One can verify that $(A,m,P,\sigma)$ is a commutative right weak braided Rota-Baxter algebra of weight 1. If we define $f$ to be the identity map on $A$, then the map $\overline{f}:T_{\sigma}^{qsh}(A)\rightarrow A$ given by $\overline{f}(a_1\otimes\cdots \otimes a_n)=(-1)^na_1\cdots a_n$ is an algebra map. So $T_{\sigma}^{qsh}(A)$ acts on $A$ by $(a_1\otimes\cdots \otimes a_n)\cdot a=(-1)^na_1\cdots a_na$. In addition, if  $A$ is unital, then it follows immediately that $A$ is a cyclic $T_{\sigma}^{qsh}(A)$-module generated by $1_A$.\end{example}

\begin{example}Let $q\in \mathbb{K}$ be an invertible number which is not a root
of unity. Consider the algebra $A=\mathbb{K}[t]_+$ of polynomials without constant terms as a
commutative braided algebra with the flip $\sigma(t^n\otimes
t^m)=t^m\otimes t^n$. Then the quantum quasi-shuffle algebra built
on $A$ is just the usual quasi-shuffle algebra
$T^{qsh}(A)$ of $A$. We define
$P:A\rightarrow A$ by
$P(t^n)=\frac{q^nt^n}{1-q^n}$. Then $P$ is a Rota-Baxter operator
of weight 1 on $A$ (cf. \cite{G2}). Moreover,
$(A,P,\sigma)$ is a commutative right weak braided
Rota-Baxter algebra of weight 1. If we define $f$ to be the
identity map on $A$, then the map $\overline{f}$ from
$T^{qsh}(A)_+=T^{qsh}(A)-\mathbb{K}$ to $A$
given by $$\overline{f}(t^{i_1}\otimes\cdots \otimes
t^{i_n})=\frac{q^{ni_n+(n-1)i_{n-1}+\cdots +i_1}t^{i_1+\cdots
i_n}}{(1-q^{i_n})(1-q^{i_n+i_{n-1}})\cdots
(1-q^{i_n+\cdots+i_1})}$$ is an algebra map. So
$T^{qsh}(A)_+$ acts on $A$ by
$$(t^{i_1}\otimes\cdots \otimes t^{i_n})\cdot
t^m=\frac{q^{ni_n+(n-1)i_{n-1}+\cdots +i_1}t^{i_1+\cdots
i_n+m}}{(1-q^{i_n})(1-q^{i_n+i_{n-1}})\cdots
(1-q^{i_n+\cdots+i_1})}.$$

As a consequence,
for any $k$, $\mathbb{K}[t]t^k$ is again a
$T^{qsh}(A)_+$-module.

More generally, the ring $\mathbb{K}[x_1,\ldots,x_n]_+$ of polynomials in $n$ variables without constant terms
admits a Rota-Baxter operator $P$ of weight 1 given by
$P(x_1^{l_1}\cdots
x_n^{l_n})=\frac{q^{l_1+\cdots+l_n}}{1-q^{l_1+\cdots+l_n}}x_1^{l_1}\cdots
x_n^{l_n}$. Therefore, $T^{qsh}(\mathbb{K}[x_1,\ldots,x_n]_+)_+$ acts
on $\mathbb{K}[x_1,\ldots,x_n]_+$.
\end{example}

Finally, by using Example 7.6, we reformulate the expression of
multiple $q$-zeta values. We will replace every term in the
multiple $q$-zeta values by an image of the algebra map
$\overline{f}$ on some element of the subalgebra of quasi-shuffle
algebra $T^{qsh}(A)_+$ generated by $A$.

From now on, we assume $q\in \mathbb{K}$ is an invertible number
which is not a root of unity.

Let $(A, P)$ be the commutative Rota-Baxter algebra
given in Example 7.6. Consider the subalgebra
$S^{qsh}(A)$ of $T^{qsh}(A)_+$ generated by
$A$. We denote by $\ast$ the usual quasi-shuffle
product on $T^{qsh}(A)_+$. Note that for any $n,i\in
\mathbb{N}$, we have
$$\overline{f}\big((t^{n})^{\ast
i}\big)=\overline{f}(t^{n})^i=\frac{q^{ni}t^{ni}}{(1-q^n)^i}.$$
Given $k\in\mathbb{N}$ and any multi-index $(i_1,\ldots,i_k)\in
\mathbb{N}^k$, we set
$$Z_q(i_1,\ldots,i_k;t)=\sum_{n_1>\cdots>n_k>0}\overline{f}\big((t^{n_1})^{\ast i_1}\ast \cdots \ast (t^{n_k})^{\ast i_k}\big),$$where the sum is
over all multi-index $(n_1,\ldots, n_k)\in \mathbb{N}^k$ with the
indicated inequalities. Then after evaluating at 1, we have
$$Z_q(i_1,\ldots,i_k)=Z_q(i_1,\ldots,i_k;1)=\sum_{n_1>\cdots>n_k>0}\frac{q^{i_1n_1}\cdots q^{i_kn_k}}{(1-q^{n_1})^{i_1}\cdots
(1-q^{n_k})^{i_k}}.$$ The series $Z_q(i_1,\ldots,i_k)$ is
Zudilin's $q$-analogue of multiple zeta values (cf. \cite{Zu}).

Now we turn to Bradley's multiple $q$-zeta values (\cite{B}).
Given $k\in\mathbb{N}$, a multi-index $(i_1,\ldots,i_k)\in
\mathbb{N}^k$ is called \emph{admissible} if $i_1\geq 2$. For any
admissible index $(i_1,\ldots,i_k)$, the \emph{multiple $q$-zeta
value} $\zeta(i_1,\ldots,i_k)$ is defined as
follows:$$\zeta_q(i_1,\ldots,i_k)=\sum_{n_1>\cdots>n_k\geq
1}\frac{q^{(i_1-1)n_1}\cdots q^{(i_k-1)n_k}}{[n_1]_q^{i_1}\cdots
[n_k]_q^{i_k}}.$$

By using the algebra map $\overline{f}$, we define a formal series
$\zeta_q(i_1,\ldots,i_k;t)\in \mathbb{K}[[t]]$ as follows:
$$\zeta_q(i_1,\ldots,i_k;t)=\sum_{n_1>\cdots>n_k\geq
1}\frac{\overline{f}\Big(\big((1-q)t^{n_1}\big)^{\ast i_1}\ast
\cdots \ast \big((1-q)t^{n_k}\big)^{\ast
i_k}\Big)}{q^{n_1+\cdots+n_k}},$$where the sum is over all
multi-index $(n_1,\ldots, n_k)\in \mathbb{N}^k$ with the indicated
inequalities. By evaluating at 1, we have that
$\zeta_q(i_1,\ldots,i_k;1)=\zeta_q(i_1,\ldots,i_k)$.

\section*{Acknowledgements}
 I am grateful to Marc Rosso from whom I learned the original construction of quantum quasi-shuffles. This work was partially supported by National Natural
Science Foundation of China (Grant No. 11201067).

\bibliographystyle{amsplain}

\end{document}